\newcommand\blfootnote[1]{%
  \begingroup
  \renewcommand\thefootnote{}\footnote{#1}%
  \addtocounter{footnote}{-1}%
  \endgroup
}
\newtheorem{theorem}{Theorem}[section]
\newtheorem{lemma}[theorem]{Lemma}
\newtheorem{proposition}[theorem]{Proposition}
\newtheorem{corollary}[theorem]{Corollary}
\theoremstyle{definition}
\newtheorem{definition}[theorem]{Definition}
\theoremstyle{remark}
\newcommand{\Rd}{\mathbb{R}^d}
\newcommand{\be}{\begin{equation}}
\newcommand{\ee}{\end{equation}}
\definecolor{darkred}{rgb}{0.8,0,0}
\begin{document}

\title[Dilation estimates for Wiener amalgam spaces of Orlicz type]
{Dilation estimates for Wiener amalgam spaces of Orlicz type}

\author{B\"{u}\c{s}ra Ar\i s}
\address{\hspace{-\parindent} B\"{u}\c{s}ra Ar\i s, \.{I}stanbul University, Faculty of Science, Department of Mathematics, 34134 Veznec$\dot{\hbox{\i}}$ler, \.{I}stanbul, Turkey}
\email{busra.aris@istanbul.edu.tr}

\author{Nenad Teofanov$^*$}\blfootnote{$^*$ corresponding author}
\address{\hspace{-\parindent} Nenad Teofanov, University of Novi Sad, Faculty of Science, Department of Mathematics and Informatics, Novi Sad, Serbia
}
\email{tnenad@dmi.uns.ac.rs}

\author{Serap \"Oztop}
\address{\hspace{-\parindent} Serap \"Oztop, \.{I}stanbul University, Faculty of Science, Department of Mathematics, 34134 Veznec$\dot{\hbox{\i}}$ler, \.{I}stanbul, Turkey}
\email{oztops@istanbul.edu.tr}

\subjclass[2020]{}
\keywords{Wiener amalgam spaces, Orlicz spaces, dilation operator}

\begin{abstract}
We extend dilation properties of Wiener amalgam spaces when the local and global componenets are Lebesgue spaces to  a more general setting of Orlicz spaces. We recover the result of Cordero and Nicola
when restricted to Lebesgue spaces. In addition, we prove continuity of the Zak transform on  Wiener amalgam spaces with Orlicz spaces as their local components.
\end{abstract}
\maketitle

\section{Introduction} \label{sec1}

Wiener amalgam spaces are a family of function spaces which amalgamates local properties with global ones. This idea can be traced back to the work of N. Wiener in his theory of generalized harmonic analysis. He considered the amalgam spaces
$$
W(L^1 , L^2), \quad W(L^2 , L^1), \quad  W(L^1 , L^{\infty}), \quad \text{and}
\quad  W(L^{\infty} , L^1)
$$
on the real line in \cite{Wie1, Wie2, Wie3}, where the so-called standard amalgam space $W(L^p , L^q)$, $ 1 \leq p,q \leq \infty$,  is defined by the norm
\begin{equation}\label{disknorm}
\|f\|_{W(L^p , L^q)}=\Big(\sum_{n \in \mathbb{Z}} \Big(\int_{n}^{n+1} |f(t)|^p dt \Big)^{\frac{q}{p}} \Big)^{\frac{1}{q}},
\end{equation}
with usual modification when $p$ or $q$ is infinity. Thus, $L^p $ is the local, and $ L^q$ is the global component of $W(L^p , L^q)$.

In \cite{Fei2, Fei}, H. G. Feichtinger introduced a generalization of Wiener amalgams to the wide range of Banach spaces of functions as their local and global components. Wiener amalgam spaces are mostly studied for Lebesgue spaces on the real line. C. Heil considered weighted Wiener amalgam spaces $W(L^p, L_{\omega}^q)$ on locally compact groups and on the real line  in \cite{ Heil1, Heil, Heil2}.

Wiener amalgams find themselves as a very useful tool, for example, in sampling theory \cite{Heil2} and in  time-frequency analysis \cite{Groc}. It turned out that continuity properties of certain operators can be conveniently described in the context of Wiener amalgam spaces (see \cite{Cor-Nic, Cor-Rod}).

In contrast to the most familiar setting of  Lebesgue spaces, we consider  Wiener amalgams with Orlicz spaces as their local and global components. Beside the fact that Orlicz spaces generalize $L^p$ spaces, they include other important classes, such as the Zygmund space $L \log^{+} L$ which is a Banach space related to Hardy-Littlewood maximal functions. Orlicz spaces also contain certain Sobolev spaces as their subspaces.

In \cite{ar-oz}, Arıs and \"Oztop considered Wiener amalgam spaces with respect to Orlicz spaces $W(L^{\Phi}(\mathbb{A}), L^{1}(\mathbb{A}))$ and $W(L^{\infty}(\mathbb{A}), L^{\Phi}(\mathbb{A}))$ on the affine group $\mathbb{A}$. They gave some properties of Wiener amalgam spaces of Orlicz type and proved convolution relations for $W(L^{\Phi}(\mathbb{A}), L^{1}(\mathbb{A}))$ and $W(L^{\infty}(\mathbb{A}), L^{\Phi}(\mathbb{A}))$. In \cite{ar-oz2} they generalized these spaces to the weighted Orlicz spaces on locally compact groups, which they called the Orlicz amalgam spaces, and studied some basic properties such as translation invariance, density, duality. Moreover, it is shown that these spaces are Banach algebras under convolution multiplication.

In this paper we give dilation estimates for Wiener amalgam spaces of Orlicz type. Such estimates were derived in \cite{Cor-Nic, Cor-Rod} in the context of Lebesgue spaces.  We recover these results since they arise as a special case of Orlicz type spaces considered in this paper. To the best of our knowledge, dilation properties of Wiener amalgam spaces of Orlicz type have not been studied before.

In addition, we prove the continuity of the Zak transform on Wiener amalgam spaces when the local component is an Orlicz space. The Zak transform is an essential tool when proving the well-known
"Amalgam Balian-Low Theorem", which we recall in Section \ref{sec5}.
In fact, the Zak transform gives an information whether a Gabor system is an orthonormal basis or a Riesz basis for $ L^2 (\Rd) $, see Lemma \ref{lem.11.9.4}. It turns out that the Gabor frames (i.e. Gabor systems which are  frames, see Section \ref{sec5}) give rise to (Gabor) expansions which are convergent in the entire range of weighted amalgam spaces, see \cite{Bckor, GrHeOk, K-O}. Gabor frame expansions in the context of Wiener amalgam spaces of Orlicz type which extend results from \cite{GrHeOk} will be the subject of our future study.

To end this introduction, we note that  Orlicz spaces were recently used in the context of Orlicz modulation spaces, see \cite{Grtu, Tumo}. It turned out that  Orlicz modulation spaces provide strictly sharper estimates for pseudo-differential operators when compared to the classical modulation spaces. Since the Fourier transform images of modulation spaces are certain Wiener amalgam spaces, it is natural to expect that Wiener amalgam spaces with respect to Orlicz spaces, which are studied in this paper, can be used in the context of pseudo-differential operators as well.

This paper is organized as follows. In Section \ref{sec2}, we review necessary background on Orlicz spaces. In Section  \ref{sec3}, we give basic structure of Wiener amalgam spaces of Orlicz type on $\Rd$ which we denote by $W(L^{\Phi_1} (\Rd), L^{\Phi_2} (\Rd))$. In Section  \ref{sec4}, we study dilation properties of Wiener amalgam spaces of Orlicz type (Lemma \ref{lem.2.5.2} and Proposition \ref{main.result} ). In Section
 \ref{sec5}, we extend a result of Heil
( \cite[Proposition 11.9.5]{Heil})
to Orlicz spaces as local components, and recall the famous "Amalgam Balian-Low Theorem".

\section{Prelimaniries on Orlicz spaces} \label{sec2}

Let us recall some facts concerning Young functions and Orlicz spaces. We refer to \cite{R-R, R-R2} for proofs and details.

A function $\Phi: [0,\infty) \to [0, \infty]$ is called {\em a Young function} if $\Phi$ is convex, $\Phi(0)=0$ and $\lim \limits_{x \to \infty} \Phi(x)=\infty$. For a Young function $\Phi$, $\Phi^{-1}$ is defined by
$$
\Phi^{-1}(y)=\inf\{x>0: \Phi(x)>y\}, \qquad y \geq 0,
$$
where $\inf \emptyset =\infty$. For example, if $\Phi(x)=x^p$, $p \geq 1$, $x \geq  0$,
then $\Phi^{-1}(y)=y^{1/p}$, $ y \geq 0$.

For a Young function $\Phi$, {\em the complementary function} $\Psi$ of $\Phi$ is given by
$$\Psi(y)= \sup\{xy- \Phi(x) : x >0\}, \qquad y \geq 0.$$
Then $\Psi$ is also a Young function, and $(\Phi, \Psi)$ is called a {\em complementary Young pair}. Also, if $\Psi$ is the complementary function of $\Phi$, then $\Phi$ is the complementary  function of $\Psi$, so $(\Psi, \Phi)$ is a complementary Young pair as well. For a given Young function $\Phi$ and its complementary function $\Psi$, we have the Young inequality
$$xy \leq \Phi(x) + \Psi(y), \qquad x,y \geq 0.$$

Let us note that in general, there is a straightforward method to construct various complementary pairs of strictly increasing continuous Young functions as described in \cite[Theorem 1.3.3]{R-R}. Suppose that $\varphi: [0,\infty) \to [0,\infty)$ is a continuous strictly increasing function with $\varphi(0) = 0$ and $\lim_{x \to \infty}\varphi(x)=\infty$. Then
$$\Phi(x)=\int_{0}^{x} \varphi(y)dy$$
is a continuous strictly increasing Young function and
$$\Psi(x)=\int_{0}^{y} \varphi^{-1}(x)dx$$
is the complementary Young function of $\Phi$ which is also continuous and strictly increasing. Here $\varphi^{-1}$ is the inverse function of $\varphi$. There are some examples of complementary Young pairs satisfying the above construction (see \cite[p. 15]{R-R}).
\begin{enumerate}
\item If $\Phi(x)=x^p$, then $\Psi(x)=x^q$ for $1<p,q<\infty$ with $\frac 1p +\frac 1q=1$,
\item If $\Phi(x)=x \ln(1+x)$, then $\Psi(x)\asymp \cosh(x)-1$,
\item If $\Phi(x)= \cosh(x)-1$, then $\Psi(x)\asymp x \ln(1+x)$.
\end{enumerate}

By the definition, a Young function can attain the value $\infty$ at a point. When considering the pair of complementary Young functions $(\Phi, \Psi)$ we often assume that $\Phi$ is continuous on $[0,\infty)$ and increasing on $(0,\infty)$. Note that even is $\Phi$ is a continuous function, it may happen that $\Psi$ is not continuous.

Let $(\Phi_1, \Psi_1)$ and $(\Phi_1, \Psi_2)$ be  complementary Young pairs. If $\Phi_1(x) \leq \Phi_2 (x)$ for all $x \geq x_0 \geq 0$, then we have $\Psi_2(y)\leq \Psi_1(y)$ for all $y \geq y_0 =\Phi_1(x_0) \geq 0$.

Let $\Phi_1, \Phi_2$ be two Young functions. If there exist a constant $c>0$ and $x_0 \geq 0$ (depending on $c$) such that $\Phi_1 (x) \leq \Phi_2(cx)$ for all $x \geq x_0$, then we say that $\Phi_2$ is
{\em stronger than} $\Phi_1$ and denote this by $\Phi_1 \prec \Phi_2$.
If, in addition, $ c=1 $ and $x_0 =0$, we say that $\Phi_2$ is {\em strictly stronger than} $\Phi_1$.
If $\Phi_1 \prec \Phi_2$ and $\Phi_2 \prec \Phi_1$, then we write $\Phi_1 \asymp \Phi_2$
and we may treat them as equivalent functions. Also, $\Phi_1 \prec  \Phi_2$ if and only if $\Phi_2^{-1} (y) \leq c~ \Phi_1^{-1} (y)$ for all $y \geq y_0 =\Phi_1(x_0)$.

 Given a Young function $\Phi$, we define
 $$\mathcal{L}^{\Phi}(\Rd)=\Big\{f : \Rd \to \mathbb{C} ~\mbox{measurable} : \int_{\Rd} \Phi( |f(x)|) dx < \infty\Big\}.$$
In general $\mathcal{L}^{\Phi}(\Rd)$ is not a linear space. For that reason we define
{\em the Orlicz space}  $L^{\Phi}(\Rd) $ as follows:
\begin{equation}\label{orliczdefn}
L^{\Phi}(\Rd)= \Big\{f : \Rd \to \mathbb{C} ~\mbox{measurable} : \int_{\Rd} \Phi( \alpha |f(x)|) dx < \infty ~\mbox{for some} ~\alpha >0 \Big\}.
\end{equation}
As usual, $f$ is a representative of the corresponding equivalence class of measurable functions. $ L^{\Phi}(\Rd)$  is a Banach space under  {\em the Orlicz norm} $\|\cdot\|_{L^{\Phi}}$ defined for $f \in L^{\Phi}(\Rd)$ by
$$\|f\|_{L^{\Phi}} = \sup \Big\{ \int_{\Rd} |f(x)g(x)|dx : \int_{\Rd} \Psi(|g(x)|) dx \leq 1 \Big\},$$
where $\Psi$ is the complementary Young function of $\Phi$.

 {\em The Luxemburg norm} $\|\cdot\|_{L^{\Phi}}^{\circ}$ on $L^{\Phi}(\Rd)$ is defined to be
$$\|f\|_{L^{\Phi}}^{\circ} =\inf \Big\{k>0 : \int_{\Rd} \Phi\bigg(\frac{|f(x)|}{k}\bigg) dx \leq 1\Big \}.$$
It is known that these norms are equivalent, that is,
\begin{equation}\label{eq.norm}
\|\cdot\|_{L^{\Phi}}^{\circ} \leq \|\cdot\|_{L^{\Phi}} \leq 2 \|\cdot\|_{L^{\Phi}}^{\circ},
\end{equation}
and
$$\|f\|_{L^{\Phi}}^{\circ} \leq 1 ~\mbox{if and only if}~ \int_{\Rd} \Phi(|f(x)|) d x \leq 1.$$

We denote norm equivalence of Banach spaces $ (X, \| \cdot \|_X ) $ and
$ (Y, \| \cdot \|_Y ) $ by $ \| \cdot \|_X   \asymp \| \cdot \|_Y$ or
$ X \asymp Y$. Although the same notation is used for the equivalence of Young functions,
the meaning will be clear from the context.

A Young function $\Phi$ satisfies {\em the $\Delta_2$ condition} if there exist a constant $K>0$ and an $x_0 \geq 0$ such that $\Phi(2x) \leq K \Phi(x)$ for all $x \geq x_0$. In this case, we write $\Phi \in \Delta_2$. Considering $\Phi(x)=x^p$, $p \geq 1$, one see that $\Phi \in \Delta_2$ for $K \geq 2$. On the other hand, if $\Phi(x)=e^x -1$, then $\Phi$ does not satisfy the $\Delta_2$ condition. The  $\Delta_2$ condition plays a central role in the structure theory of Orlicz spaces.

Let $\mathcal{S}^{\Phi}(\Rd)$ be the closure of the linear space of all step functions in $L^{\Phi}(\Rd)$. Then $\mathcal{S}^{\Phi}(\Rd)$ is a Banach space which contains $C_c (\Rd)$, the space of all continuous functions on $\Rd$ with compact support, as a dense subspace \cite[Proposition 3.4.3]{R-R}. Moreover, if $(\Phi, \Psi)$ is a complementary Young pair, then  $(\mathcal{S}^{\Phi}(\Rd) )^*$, the dual of $\mathcal{S}^{\Phi}(\Rd)$, can be identified with $L^{\Psi}(\Rd)$ in a natural way \cite[Theorem 4.1.6]{R-R}. Another useful characterization of $\mathcal{S}^{\Phi}(\Rd)$ is that $f \in \mathcal{S}^{\Phi}(\Rd)$ if and only if for every $\alpha >0$, $\alpha f \in \mathcal{S}^{\Phi}(\Rd)$. If $\Phi \in \Delta_2$, then it follows that $\mathcal{S}^{\Phi}(\Rd)= L^{\Phi}(\Rd)$ so that $L^{\Phi}(\Rd)^*=L^{\Psi}(\Rd)$. If, in addition, $\Psi \in \Delta_2$, then the Orlicz space $L^{\Phi}(\Rd)$ is a reflexive Banach space.



We  also recall H{\"{o}}lder's inequality which states that if $f \in L^\Phi(\Rd)$ and $g\in L^\Psi(\Rd)$, then  $fg \in L^1(\Rd)$ and
$$
\|fg\|_{L^1} \leq 2\|f\|_{L^\Phi}^\circ \|g\|_{L^\Psi}^\circ.
$$

A normed space $(Y, \| \cdot\|_{Y})$ consisting of measurable of complex valued functions on a measurable space $X$ is called {\em  solid} if for each measurable $f: X \to \mathbb{C}$ satisfying $|f| \leq |g|$ almost everywhere for some $g \in Y$, we have $f \in Y$ and $\|f\|_{Y} \leq \|g\|_{Y}$. Since the Young function $\Phi$ is increasing, the Orlicz space $L^{\Phi}(\Rd)$ is a solid space, \cite[Theorem 2]{R-R}. Also, if the right derivative of a Young function $\Phi$ at zero is positive, i.e., ${\Phi'}_{+}(0)>0$, then the inclusion $L^{\Phi}(\Rd) \subseteq L^{1}(\Rd)$ is valid. This implies that there exists a constant $c>0$ such that
\begin{equation}\label{inc}
\|f\|_{L^{1}} \leq c \|f\|_{L^{\Phi}},
\end{equation}
for every $f \in L^{\Phi}(\Rd)$ \cite[Proposition 1]{R-R2}.

 For $1 \leq p < \infty$ and the Young function $\Phi(x) =x^p$, the space $L^{\Phi} (\Rd)$ is the classical Lebesgue space $L^{p}(\Rd)$ and the norm $\|\cdot\|_{L^{\Phi}}$ is equivalent to the usual Lebesgue spaces norm $\|\cdot\|_{L^{p}}$.

If $p=1$, then we obtain the space $L^{1}(\Rd)$. In this case, the complementary Young function of $\Phi(x) =x$ is
\begin{equation}\label{yf}
\Psi(x)=
\begin{cases}
0, & 0 \leq x \leq 1,\\
\infty,& x>1,
\end{cases}
\end{equation}
and $\|f\|_{L^{\Phi}} =\|f\|_{L^{1}}$ for all $f \in L^{1}(\Rd)$. If $p=\infty$, then for the Young function $\Psi$ given in \eqref{yf}, the space $L^{\Psi} (\Rd)$ is equal to the space $L^{\infty}(\Rd)$ and we have $\|f\|_{L^{\Psi}} =\|f\|_{L^{\infty}}$ for all $f \in L^{\infty}(\Rd)$.

In addition, for the Young function
\begin{equation}\label{yf1}
\Phi_s(x)=
\begin{cases}
x, & 0 \leq x \leq 1,\\
\infty,& x>1,
\end{cases}
\end{equation}
the space $L^{\Phi_s} (\Rd)$ becomes $L^1 (\Rd) \cap L^{\infty}(\Rd)$ with the norm $$\|x\|_{L^{\Phi_s}}=\max\{\|x\|_{L^1}, \|x\|_{L^{\infty}}\}.$$
We note that $$ $$
\begin{equation*}
\Phi_s(x) ^{-1} =
\begin{cases}
x, & 0 \leq x \leq 1,\\
1,& x>1,
\end{cases}
\end{equation*}
If we consider the Young function
\begin{equation}\label{yf2}
\Phi_b(x)=
\begin{cases}
0, & 0 \leq x \leq 1,\\
x-1,& x>1,
\end{cases}
\end{equation}
we obtain the space $L^1 (\Rd) + L^{\infty}(\Rd)$ with the norm
$$
\|x\|_{L^{\Phi_b}}=\inf\limits_{x=x_1 +x_2}(\|x_1\|_{L^1}+ \|x_2\|_{L^{\infty}}).
$$
We have $\Phi_b(x) ^{-1} = x+1$, $x \geq 0$. In addition,
$\Phi_s$ and $\Phi_b$ are complementary Young functions \cite[p. 52]{Maligranda} which satisfy
the  $\Delta_2$ condition.

Since
$$
L^1 (\Rd) \cap L^{\infty}(\Rd) \subset L^{\Phi}(\Rd) \subset L^1 (\Rd) + L^{\infty}(\Rd)
$$
for any Young function $\Phi$, we may consider $L^1 (\Rd) \cap L^{\infty}(\Rd)$ as the smallest Orlicz space, and  $L^1 (\Rd) + L^{\infty}(\Rd)$ as the biggest one \cite[p.100]{Maligranda}. We denote the spaces $L^1 (\Rd) \cap L^{\infty}(\Rd)$ and $L^1 (\Rd) + L^{\infty}(\Rd)$ by $L^{\Phi_s} (\Rd)$ and  $L^{\Phi_b} (\Rd)$, respectively. Also, we have $L^{\Phi_s}(K) \asymp L^{\infty}(K)$ for any compact subset $K \subset \Rd$.

We denote the translation by $T_y f(x)=f(x-y)$, the modulation by $M_y f(x)= f(x) e^{2 \pi i xy}$ and the dilation operators by $D_\lambda f(x)=f(\lambda x)$ for $x,y \in \Rd$ and $\lambda>0$. All of these operators are well-defined, linear and bounded operators on Orlicz spaces \cite{R-R}.

Properties of the dilation operator $D_{\lambda}$ when acting on Orlicz spaces are recently studied by
Blasco and Osan\c{c}l\i ol  in \cite{Os-Blasco}.
To formulate their result given in Lemma  \ref{cor.dil} below we need some preparation.

Given $\lambda >0$, another norm on the Orlicz space $L^{\Phi} (\mathbb{R})$ is defined by
$$\|f\|_{L^{\Phi}}^{\circ, \lambda} =\inf \Big\{k>0 : \int_{\mathbb{R}} \Phi\bigg(\frac{|f(x)|}{k}\bigg) dx \leq  \lambda\Big \}.$$

When $\lambda=1$ we obtain $\|f\|_{L^{\Phi}}^{\circ, 1} =\|f\|_{L^{\Phi}}^{\circ}$,  and for
$\lambda >0$ we have
\begin{equation}\label{eq.dil2}
\|D_{\lambda}f\|_{L^{\Phi}}^{\circ}= \|f\|_{L^{\Phi}}^{\circ, \lambda}, \qquad
\forall f \in L^{\Phi}(\Rd).
\end{equation}

By convexity, it can be easily seen that
$$
\frac{\lambda_1}{\lambda_2} \|\cdot\|_{L^{\Phi}}^{\circ, \lambda_1} \leq \|\cdot\|_{L^{\Phi}}^{\circ, \lambda_2} \leq \|\cdot\|_{L^{\Phi}}^{\circ, \lambda_1}
\;\; \text{when} \;\;  0 < \lambda_1 < \lambda_2.
$$
Therefore,
\begin{equation}\label{norm2}
 \lambda \|f\|_{L^{\Phi}}^{\circ, \lambda}\leq \|f\|_{L^{\Phi}}^{\circ} \leq  \|f\|_{L^{\Phi}}^{\circ, \lambda} \quad 0 <\lambda \leq 1,
\end{equation}
and
\begin{equation}\label{norm1}
 \|f\|_{L^{\Phi}}^{\circ, \lambda} \leq \|f\|_{L^{\Phi}}^{\circ} \leq \lambda \|f\|_{L^{\Phi}}^{\circ, \lambda}, \quad \lambda \geq 1 .
\end{equation}
On the other hand, by \eqref{eq.norm}, it follows that
\begin{equation}\label{eq.norm2.1}
 \lambda \|f\|_{L^{\Phi}}^{\circ, \lambda} \leq \|f\|_{L^{\Phi}} \leq 2  \|f\|_{L^{\Phi}}^{\circ, \lambda}, \quad 0 <\lambda \leq 1,
\end{equation}
and
\begin{equation}\label{eq.norm2.2}
 \|f\|_{L^{\Phi}}^{\circ, \lambda} \leq \|f\|_{L^{\Phi}} \leq 2\lambda \|f\|_{L^{\Phi}}^{\circ, \lambda}, \quad \lambda \geq 1.
\end{equation}

Let
$$
C_\Phi (\lambda):=\|D_{\lambda} \|_{L^{\Phi} \to L^{\Phi}}.
$$
Then $C_\Phi (\lambda)$ is non increasing, submultiplicative and $C_\Phi(1)=1$.

If $\Phi(x) =x^p$, $p \geq 1$, then we have $C_\Phi (\lambda)={\lambda}^{-1/p}$ for $L^p$-spaces.

The following lemma
gives an estimate for the norm of the dilation operator $D_{\lambda}$.
It  will be used when proving the main result of this paper.

\begin{lemma}\label{cor.dil}\cite{Os-Blasco}
    Let $\Phi$ be a Young function. Then,
    \begin{enumerate}
        \item[i)] $C_\Phi (\lambda) \geq \sup \limits_{ \mu >0} \frac{\Phi^{-1}(\mu)}{\Phi^{-1}(\lambda \mu)}$.
       \item[ii)] If $\Phi(st) \leq \Phi(s)\Phi(t)$ for all $s,t \geq 0$, then $C_\Phi (\lambda) \leq \frac{1}{\Phi^{-1}(\lambda)}$.
    \end{enumerate}
In particular, if $\Phi$ is submultiplicative and $\Phi(1)=1$, then $C_\Phi (\lambda)=\frac{1}{\Phi^{-1}(\lambda)}$.
\end{lemma}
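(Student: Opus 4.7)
The plan is to prove (i) and (ii) independently by direct computation, and then combine them to obtain the ``in particular'' conclusion. For (i), I would test the dilation operator on indicator functions. A direct computation from the definition of the Luxemburg norm gives, for any measurable $E$ with $0 < |E| < \infty$,
\[
\|\chi_E\|_{L^\Phi}^\circ = \frac{1}{\Phi^{-1}(1/|E|)},
\]
obtained by minimising $k>0$ subject to $|E|\Phi(1/k)\le 1$. Since $D_\lambda\chi_E = \chi_{E/\lambda}$ with $|E/\lambda|=|E|/\lambda$, the same formula yields $\|D_\lambda\chi_E\|_{L^\Phi}^\circ=1/\Phi^{-1}(\lambda/|E|)$. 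Setting $\mu = 1/|E|$, the ratio $\|D_\lambda\chi_E\|_{L^\Phi}^\circ/\|\chi_E\|_{L^\Phi}^\circ$ equals $\Phi^{-1}(\mu)/\Phi^{-1}(\lambda\mu)$, and letting $|E|$ range over $(0,\infty)$ yields the lower bound in (i).

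For (ii), the first step is to invoke \eqref{eq.dil2} to recast the claim as the equivalent inequality $\|f\|_{L^\Phi}^{\circ,\lambda}\le\|f\|_{L^\Phi}^\circ/\Phi^{-1}(\lambda)$. Writing $K:=\|f\|_{L^\Phi}^\circ$, so that $\int\Phi(|f|/K)\,dx\le 1$, the submultiplicativity hypothesis applied with $s=\Phi^{-1}(\lambda)$ and $t=|f(x)|/K$ produces the pointwise bound
\[
\Phi\!\left(\frac{\Phi^{-1}(\lambda)\,|f(x)|}{K}\right)\le \Phi(\Phi^{-1}(\lambda))\,\Phi(|f(x)|/K)=\lambda\,\Phi(|f(x)|/K).
\]
Integrating gives $\int\Phi(\Phi^{-1}(\lambda)|f|/K)\,dx\le\lambda$, which by the definition of $\|\cdot\|_{L^\Phi}^{\circ,\lambda}$ is exactly $\|f\|_{L^\Phi}^{\circ,\lambda}\le K/\Phi^{-1}(\lambda)$, as required.

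The ``in particular'' statement then follows by combining the two bounds: $\Phi(1)=1$ together with monotonicity forces $\Phi^{-1}(1)=1$, so the choice $\mu=1$ in (i) produces $C_\Phi(\lambda)\ge 1/\Phi^{-1}(\lambda)$, matching the upper bound of (ii). The most delicate technical point I anticipate is the identity $\Phi(\Phi^{-1}(\lambda))=\lambda$ used in (ii), which strictly speaking requires continuity of $\Phi$ at $\Phi^{-1}(\lambda)$; under the submultiplicativity assumption this causes no trouble, but in full generality one must be careful to interpret $\Phi^{-1}$ via the right-continuous inverse and to apply $\Phi(x)\le y \Leftrightarrow x\le\Phi^{-1}(y)$ only when $y$ lies in the range of $\Phi$. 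The same caveat controls the analogous step in (i).
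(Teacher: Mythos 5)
The paper does not prove this lemma at all; it is quoted verbatim from Blasco--Osan\c{c}l\i ol \cite{Os-Blasco}, so there is no internal proof to compare against. Your argument is correct and is the standard route one would expect: the lower bound in (i) by testing $D_\lambda$ on indicator functions via the exact formula $\|\chi_E\|_{L^\Phi}^{\circ}=1/\Phi^{-1}(1/|E|)$, and the upper bound in (ii) by reducing to $\|f\|_{L^\Phi}^{\circ,\lambda}\le \|f\|_{L^\Phi}^{\circ}/\Phi^{-1}(\lambda)$ through \eqref{eq.dil2} and then exploiting submultiplicativity pointwise inside the integral. Both computations check out, and the derivation of $\Phi^{-1}(1)=1$ from $\Phi(1)=1$ (via convexity forcing $\Phi(x)\ge x$ for $x>1$) correctly closes the ``in particular'' claim. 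Two small refinements: first, in (ii) you only need the inequality $\Phi(\Phi^{-1}(\lambda))\le\lambda$, not equality, so the continuity caveat you raise is lighter than you suggest --- the inequality holds for every finite-valued Young function, and can only fail at a jump to $+\infty$ at the right endpoint of the effective domain, a degenerate case in which the hypothesis $\Phi(st)\le\Phi(s)\Phi(t)$ is anyway problematic. Second, the paper's definition $C_\Phi(\lambda)=\|D_\lambda\|_{L^\Phi\to L^\Phi}$ is written with the Orlicz norm, while your computations (and the paper's subsequent use of the lemma in \eqref{eq.dil}) are with the Luxemburg norm; you should state explicitly that you work with $\|\cdot\|_{L^\Phi}^{\circ}$ throughout, consistently with how the lemma is applied in Section \ref{sec2}.
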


Note that by \eqref{eq.dil2} and Lemma \ref{cor.dil}, we know that
$$
\|f\|_{L^{\Phi}}^{\circ, \lambda} =\|f_\lambda\|_{L^{\Phi}}^{\circ} \leq \frac{1}{\Phi^{-1}(\lambda)} \|f\|_{L^{\Phi}}^{\circ}, \;\;\; f \in L^{\Phi}(\mathbb{R}), \;\; \lambda>0.
$$
As usual, for $f$ defined on $\Rd$ and $\lambda>0$, we have
\begin{equation}\label{eq.dil}
\|f_\lambda\|_{L^{\Phi}}^{\circ} \leq  \frac{1}{\Phi^{-1}(\lambda^d)} \|f\|_{L^{\Phi}}^{\circ}.
\end{equation}

\section{Wiener-amalgam spaces of Orlicz type} \label{sec3}

Wiener amalgam spaces $W(B,C)$ were introduced by Feichtinger in \cite{Fei}. Here, the local component $B$ and the global component $C$ are solid and translation invariant Banach function spaces, and $B$ satisfies additional technical conditions. In \cite{Fei2, Fei}, one can find essential results on their embeddings, interpolation, convolution etc.  The most familiar examples of Wiener amalgam spaces are obtained for $B=L^p$, $C=L^q$, $1 \leq p, q \leq \infty$ (see \cite{Heil, Heil2, Maria}).

Wiener amalgam spaces with respect to Orlicz spaces are  recently considered by Arıs and \"{O}ztop. They are denoted by $W(L^{\Phi_1}(\Rd), L^{\Phi_2}(\Rd))$ and consist of functions that are locally in $L^{\Phi_1}(\Rd)$ and globally in $L^{\Phi_2}(\Rd)$. These spaces are called Orlicz amalgam spaces and their basic properties, such as translation invariance, inclusion relations, density and can be found in \cite{ar-oz, ar-oz2}. In addition, Arıs and \"{O}ztop considered  $W(L^{\Phi_1}(\Rd), L^{\Phi_2}(\Rd))$ as Banach algebras
under the convolution by using appropriate discrete type norms.

We note that $L^{\Phi_1}(\Rd)$ satisfies conditions for local components as given in \cite{Fei} (see also \cite{Heil2}).

Next we summarize some technical results from
\cite{ar-oz2} which  will be used in the next section.

Let $Q$ be a fixed compact subset of $\Rd$ with nonempty interior, and let
$\chi_{_{\scriptstyle Q+x}}$ denote the characteristic function of the set $ Q +x,$
$x \in \Rd$. Then the control function $F_f$ of a measurable function $f: \Rd \to \mathbb{C}$ is given by
$$
F_f (x)=F_{f}^{Q}(x)=\|f \chi_{_{\scriptstyle Q+x}}\|_{L^{\Phi_1}}, \quad x \in \Rd.
$$

\begin{definition}\label{defn.}
Let $\Phi_1, \Phi_2$ be Young functions. The Wiener amalgam space of Orlicz type, $W(L^{\Phi_1}(\Rd),L^{\Phi_2}(\Rd))$ consists of all measurable functions $f: \Rd \to \mathbb{C}$ such that $f \chi_{_{\scriptstyle Q+x}} \in L^{\Phi_1}(\Rd)$ for each $x \in \Rd$ and $F_{f}^{Q} \in L^{\Phi_2}(\Rd)$.

The Orlicz amalgam norm on $W(L^{\Phi_1}(\Rd),L^{\Phi_2}(\Rd))$ is defined by
	\begin{equation}\label{eq.amalgam}
	\|f\|_{W(L^{\Phi_1}, L^{\Phi_2})}:= \|F_f\|_{L^{\Phi_2}}=\big\| \|f \chi_{_{\scriptstyle Q+x}}\|_{L^{\Phi_1}} \big\|_{L^{\Phi_2}}.
	\end{equation}

	\end{definition}

In a similar way as for Orlicz spaces, on $W(L^{\Phi_1}(\Rd), L^{\Phi_2}(\Rd))$ we define the Luxemburg norm $\|\cdot\|_{W(L^{\Phi_1}, L^{\Phi_2})}^{\circ}$  by
$$
\|f\|_{W(L^{\Phi_1}, L^{\Phi_2})}^{\circ}=\big\| \|f \chi_{_{\scriptstyle Q+x}}\|_{L^{\Phi_1}}^{\circ} \big\|_{L^{\Phi_2}}^{\circ}.
$$
By \eqref{eq.norm}, we have the norm equivalence
			\begin{equation}\label{normeq}
			\|f\|_{W(L^{\Phi_1}, L^{\Phi_2})}^{\circ} \leq \|f\|_{W(L^{\Phi_1}, L^{\Phi_2})} \leq 4 \|f\|_{W(L^{\Phi_1}, L^{\Phi_2})}^{\circ}.
			\end{equation}
		
We have already mentioned that Orlicz spaces are generalizations of Lebesgue spaces. If we consider the Young functions $\Phi_1(x)=x^{p}$, $\Phi_2(x)=x^{q}$, $1 \leq p , q \leq \infty$ in Definition \ref{defn.}, then the  Wiener amalgam space of Orlicz type $W(L^{\Phi_1}(\Rd), L^{\Phi_2}(\Rd))$ reduces the classical Wiener amalgam space $W(L^{p}(\Rd), L^{q}(\Rd))$.

Since the Orlicz spaces $L^{\Phi_1}(\Rd)$ and $L^{\Phi_2}(\Rd)$ are solid and translation invariant, the Orlicz amalgam space $W(L^{\Phi_1}(\Rd), L^{\Phi_2}(\Rd))$ is a Banach space and its definition is independent of the choice of $Q$, in the sense that each choice of $Q$ yields the same space under an equivalent norm. We refer to \cite{ar-oz2} for the proofs of Propositions \ref{prop3.3} -- \ref{obsr.}.

\begin{proposition} \label{prop3.3}
	Let $\Phi_1, \Phi_2$ be Young functions and let $f \in W(L^{\Phi_1}(\Rd), L^{\Phi_2}(\Rd))$. Then the following hold:
	\begin{enumerate}
		\item[i)] $W(L^{\Phi_1}(\Rd), L^{\Phi_2}(\Rd))$ is a translation invariant space. That is, $T_x f \in W(L^{\Phi_1}(\Rd), L^{\Phi_2}(\Rd))$, $x \in \Rd$, and
  $$\|T_x f \|_{W(L^{\Phi_1}, L^{\Phi_2})} =  \|f\|_{W(L^{\Phi_1}, L^{\Phi_2})}.$$
		
		\item[ii)] If $\Phi_1, \Phi_2 \in \Delta_2$, then the mapping $x \mapsto T_x f$ from $\Rd$ into $W(L^{\Phi_1}(\Rd), L^{\Phi_2}(\Rd))$ is continuous.
	\end{enumerate}
\end{proposition}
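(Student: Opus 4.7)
My plan is a direct computation showing that the control function intertwines with translation. Fix $y \in \Rd$. I would compute
\[
F_{T_y f}(x) = \|T_y f \cdot \chi_{Q+x}\|_{L^{\Phi_1}} = \|f(\cdot - y)\,\chi_{Q+x}(\cdot)\|_{L^{\Phi_1}} = \|f \cdot \chi_{Q+x-y}\|_{L^{\Phi_1}} = F_f(x-y) = (T_y F_f)(x),
\]
where the middle equality uses the change of variables $t \mapsto t + y$ together with translation invariance of the $L^{\Phi_1}$ norm (itself a consequence of translation invariance of Lebesgue measure). Taking $L^{\Phi_2}$ norms of both sides and invoking translation invariance of $\|\cdot\|_{L^{\Phi_2}}$ then yields $\|T_y f\|_{W(L^{\Phi_1}, L^{\Phi_2})} = \|T_y F_f\|_{L^{\Phi_2}} = \|F_f\|_{L^{\Phi_2}} = \|f\|_{W(L^{\Phi_1}, L^{\Phi_2})}$.

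\textbf{Part (ii).} For the continuity of $y \mapsto T_y f$, I would run the standard density-plus-uniform-approximation argument, with the $\Delta_2$ hypothesis supplying both density and a dominated-convergence substitute. First, I would invoke density of $C_c(\Rd)$ in $W(L^{\Phi_1}(\Rd), L^{\Phi_2}(\Rd))$, which holds under $\Phi_1, \Phi_2 \in \Delta_2$ (so that $\mathcal{S}^{\Phi_i} = L^{\Phi_i}$ for $i = 1,2$; compare \cite{ar-oz2}). Given $\varepsilon > 0$, choose $g \in C_c(\Rd)$ with $\|f - g\|_{W(L^{\Phi_1}, L^{\Phi_2})} < \varepsilon$, and apply the triangle inequality together with part (i):
\[
\|T_y f - f\|_{W(L^{\Phi_1}, L^{\Phi_2})} \leq \|T_y(f-g)\|_{W(L^{\Phi_1}, L^{\Phi_2})} + \|T_y g - g\|_{W(L^{\Phi_1}, L^{\Phi_2})} + \|g - f\|_{W(L^{\Phi_1}, L^{\Phi_2})}.
\]
By part (i) the outer two terms are each at most $\varepsilon$. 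For the middle term, let $K$ be a compact set containing $\mathrm{supp}\, g$. For $|y| \leq 1$ the function $T_y g - g$ is uniformly small (by uniform continuity of $g$) and supported in a fixed compact set $K'$, so $F_{T_y g - g}$ is supported in a fixed compact set and vanishes uniformly as $y \to 0$.

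The principal obstacle is passing from this uniform-with-compact-support convergence of $F_{T_y g - g}$ to convergence in $\|\cdot\|_{L^{\Phi_2}}$, since Orlicz spaces do not enjoy an automatic dominated convergence theorem. Here I would use the Luxemburg characterization $\|h\|_{L^{\Phi_2}}^{\circ} \leq 1$ iff $\int_{\Rd} \Phi_2(|h|) \leq 1$, combined with $\Phi_2 \in \Delta_2$, which guarantees that $\Phi_2$ is finite-valued on $[0,\infty)$ and that $\int_{\Rd} \Phi_2(|F_{T_y g - g}(x)|/k)\, dx \to 0$ for every fixed $k > 0$ by the classical dominated convergence theorem applied inside the integral (the integrand is bounded by $\Phi_2$ of a fixed constant on a fixed compact set). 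Thus the $\Delta_2$ hypothesis enters the argument twice: once to obtain density of $C_c$, and once to convert uniform convergence with compact support into norm convergence in $L^{\Phi_2}$.
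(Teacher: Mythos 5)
The paper does not actually prove Proposition \ref{prop3.3}; it defers to the reference \cite{ar-oz2}, so there is no in-text proof to compare against. Your argument is correct and is the standard one for amalgam spaces: the intertwining identity $F_{T_y f}=T_y F_f$ together with translation invariance of the Orlicz norm gives part (i), and density of $C_c(\Rd)$ plus uniform approximation gives part (ii).

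One small remark on part (ii): the dominated-convergence detour for the middle term is more machinery than you need. Since $|T_y g-g|\leq \|T_y g-g\|_{\infty}\,\chi_{K'}$ with $K'$ compact and fixed for $|y|\leq 1$, solidity gives $F_{T_y g-g}\leq \|T_y g-g\|_{\infty}\,\|\chi_Q\|_{L^{\Phi_1}}\,\chi_{E}$ for a fixed compact set $E$, and then homogeneity of the norm yields $\|T_y g-g\|_{W(L^{\Phi_1},L^{\Phi_2})}\leq \|T_y g-g\|_{\infty}\,\|\chi_Q\|_{L^{\Phi_1}}\,\|\chi_E\|_{L^{\Phi_2}}\to 0$ directly, with no appeal to finiteness of $\Phi_2$ or to dominated convergence. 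So the $\Delta_2$ hypothesis is genuinely needed only once, to get density of $C_c(\Rd)$ in $W(L^{\Phi_1}(\Rd),L^{\Phi_2}(\Rd))$; your version is not wrong, just heavier than necessary at that step.
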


\begin{proposition}\label{thm6}
	Let $(\Phi, \Psi)$ be a complementary Young pair with ${\Phi'}_{+}(0)>0$ and ${\Psi'}_{+}(0)>0$. Then,
	$$W(L^{\Phi}(\Rd), L^{\Phi}(\Rd))=L^{\Phi}(\Rd).$$
\end{proposition}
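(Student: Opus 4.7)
The plan is to establish the norm equivalence $\|f\|_{L^{\Phi}}\asymp \|f\|_{W(L^{\Phi},L^{\Phi})}$ by proving the two embeddings separately, with each direction invoking exactly one of the two hypotheses.

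For the continuous embedding $L^{\Phi}(\Rd)\hookrightarrow W(L^{\Phi}(\Rd),L^{\Phi}(\Rd))$, I would start from the dual characterisation of the Orlicz norm,
\[
\|F_f\|_{L^{\Phi}}=\sup\Big\{\int_{\Rd}F_f(x)|g(x)|\,dx:\int_{\Rd}\Psi(|g|)\,dx\leq 1\Big\},
\]
combined with the trivial pointwise bound $F_f(x)\leq \|f\|_{L^{\Phi}}$, which follows from the solidity of $L^{\Phi}$. The hypothesis ${\Psi'}_{+}(0)>0$, read through \eqref{inc} with $\Psi$ in the role of $\Phi$, yields $L^{\Psi}(\Rd)\hookrightarrow L^{1}(\Rd)$; combined with \eqref{eq.norm}, every admissible $g$ then satisfies $\|g\|_{L^1}\leq C$. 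Putting the pieces together, $\int F_f|g|\leq C\|f\|_{L^{\Phi}}$, and taking the supremum gives $\|f\|_{W(L^{\Phi},L^{\Phi})}\leq C\|f\|_{L^{\Phi}}$.

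For the reverse embedding I would fix the tiling $Q=[0,1)^d$, so that $\{Q+k\}_{k\in\Z^d}$ partitions $\Rd$, and set $a_k=\|f\chi_{Q+k}\|_{L^{\Phi}}^{\circ}$. The argument splits into three steps. First, convexity of $\Phi$ together with $\Phi(0)=0$ gives $\Phi(|f|/r)\leq (a_k/r)\Phi(|f|/a_k)$ pointwise on $Q+k$ whenever $r\geq a_k$; integrating the identity $\int_{Q+k}\Phi(|f|/a_k)\,dy\leq 1$ and summing over $k$ yields $\int_{\Rd}\Phi(|f|/r)\,dy\leq \sum_k a_k/r$, so that the choice $r=\sum_k a_k$ delivers $\|f\|_{L^{\Phi}}^{\circ}\leq \sum_k a_k$. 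Second, the condition ${\Phi'}_{+}(0)>0$ transfers verbatim from $L^{\Phi}(\Rd)$ to the Orlicz sequence space $\ell^{\Phi}(\Z^d)$ and furnishes $\ell^{\Phi}\hookrightarrow \ell^1$, so that $\sum_k a_k\leq C\|(a_k)\|_{\ell^{\Phi}}^{\circ}$. Third, one establishes the discrete--continuous equivalence $\|(a_k)\|_{\ell^{\Phi}}^{\circ}\asymp \|F_f\|_{L^{\Phi}}^{\circ}$: for $x\in Q+k$ the translate $Q+x$ meets only the $2^d$ neighbouring cells $Q+k+j$, $j\in\{0,1\}^d$, and the triangle inequality for the Luxemburg norm gives $F_f(x)\leq \sum_{j\in\{0,1\}^d}a_{k+j}$, yielding the upper bound; a symmetric argument on a shifted sub-cube inside $Q+k$ (on which $Q+x$ contains a fixed portion of $Q+k$) produces the reverse bound. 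Chaining the three estimates gives $\|f\|_{L^{\Phi}}\leq C'\|f\|_{W(L^{\Phi},L^{\Phi})}$.

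I expect the main obstacle to be the third step, the discrete--continuous equivalence of amalgam norms in the Orlicz setting. The upper bound is routine, but the matching lower bound requires some care: one must exhibit, for each lattice point $k$, a set of positive measure of $x$'s for which $F_f(x)$ is comparable from below to $a_k$, and then pass through the Luxemburg norm so that the constants do not depend on $f$. Once this is in place the remainder is bookkeeping, and the hypotheses ${\Phi'}_{+}(0)>0$ and ${\Psi'}_{+}(0)>0$ play completely symmetric roles, one for each direction, both through \eqref{inc}.
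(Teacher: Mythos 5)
The paper does not actually contain a proof of Proposition \ref{thm6}: it is quoted from \cite{ar-oz2} together with Propositions \ref{prop3.3}--\ref{obsr.}, so there is nothing internal to compare your argument against, and I judge it on its own terms. The overall two-sided strategy is reasonable, and the easy halves are correct. Your first embedding is really the statement $L^{\infty}(\Rd)\hookrightarrow L^{\Phi}(\Rd)$ applied to $F_f$: the constant bound $F_f\leq\|f\|_{L^{\Phi}}$ plus $\|g\|_{L^1}\lesssim 1$ for all $g$ with $\int\Psi(|g|)\leq 1$ (via \eqref{inc} for $\Psi$ and \eqref{eq.norm}) does give $\|F_f\|_{L^{\Phi}}\lesssim\|f\|_{L^{\Phi}}$, and this is exactly what ${\Psi'}_{+}(0)>0$ buys. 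Steps 1 and 2 of the converse (the convexity argument giving $\|f\|_{L^{\Phi}}^{\circ}\leq\sum_k a_k$, and $\ell^{\Phi}\hookrightarrow\ell^{1}$ from ${\Phi'}_{+}(0)>0$, since $\Phi(x)\geq cx$ forces $\sum_k\Phi(a_k/s)\geq c\sum_k a_k/s$) are also fine.

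The genuine gap is the lower bound in your Step 3. As you describe it --- a single ``shifted sub-cube inside $Q+k$ on which $Q+x$ contains a fixed portion of $Q+k$'' --- the argument fails: for no $x\neq k$ does $Q+x$ contain $Q+k$, and $\|f\chi_{(Q+x)\cap(Q+k)}\|_{L^{\Phi}}$ does not control $a_k$, because $f$ may be concentrated on the part of $Q+k$ that $Q+x$ misses. One has to cover $Q+k$ by $2^d$ half-size sub-cubes, attach to each a set of $x$'s of positive measure whose window contains that entire sub-cube, and only then convert the pointwise lower bounds on $F_f$ into a norm bound; this discrete--continuous equivalence is precisely Proposition \ref{obsr.}, which is already stated in the paper and which you should cite rather than re-derive. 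With that substitution your proof closes. Separately, a problem with the statement itself rather than with your argument: for a complementary pair, ${\Phi'}_{+}(0)>0$ means $\inf_{x>0}\Phi(x)/x=c>0$ (the quotient is nondecreasing by convexity), hence $\Phi(x)\geq cx$ for all $x$, hence $\Psi\equiv 0$ on $[0,c]$ and ${\Psi'}_{+}(0)=0$. The two hypotheses of Proposition \ref{thm6} are therefore mutually exclusive, and the proposition as written is vacuous; your closing remark that the two conditions ``play completely symmetric roles, one for each direction'' is the symptom, since each direction needs an embedding into $L^1$ that the other direction's hypothesis rules out.
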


\begin{proposition}\label{thm4}
	Let $\Phi, \Phi_1 , \Phi_2$ be Young functions. If $\Phi_1 \prec \Phi_2$, then $$W(L^{\Phi_2}(\Rd), L^{\Phi}(\Rd)) \subseteq W(L^{\Phi_1}(\Rd), L^{\Phi}(\Rd)),$$
 and $\|f\|_{W(L^{\Phi_1}, L^{\Phi})} \lesssim \|f\|_{W(L^{\Phi_2}, L^{\Phi})}$.

 If, in addition, $\Phi \asymp \Phi_2$, then
 $$W(L^{\Phi_1}(\Rd), L^{\Phi}(\Rd)) = W(L^{\Phi_2}(\Rd), L^{\Phi}(\Rd)),$$
and $\|f\|_{W(L^{\Phi_1}, L^{\Phi})} \asymp \|f\|_{W(L^{\Phi_2}, L^{\Phi})}$.
\end{proposition}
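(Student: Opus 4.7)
The proof splits naturally into the two assertions of the statement.

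The first assertion follows by pushing the hypothesis $\Phi_1 \prec \Phi_2$ to a pointwise estimate on control functions. Since $\Phi_1(x) \leq \Phi_2(cx)$ for $x \geq x_0$, there is a continuous local embedding $L^{\Phi_2}(K) \hookrightarrow L^{\Phi_1}(K)$ on every compact $K \subset \Rd$, with embedding constant depending only on the Lebesgue measure $|K|$, on $c$ and on $x_0$. Applied with $K = Q+x$, whose measure $|Q|$ is independent of $x$, this yields the uniform pointwise estimate
\[
F_f^{\Phi_1}(x) = \|f \chi_{Q+x}\|_{L^{\Phi_1}} \leq C\, \|f \chi_{Q+x}\|_{L^{\Phi_2}} = C\, F_f^{\Phi_2}(x), \qquad x \in \Rd,
\]
with $C$ independent of $x$ and of $f$. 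The solidity of $L^{\Phi}(\Rd)$ then upgrades this to $\|F_f^{\Phi_1}\|_{L^{\Phi}} \leq C \|F_f^{\Phi_2}\|_{L^{\Phi}}$, which is exactly the claimed inclusion together with the norm inequality.

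For the equality under the additional hypothesis $\Phi \asymp \Phi_2$, one direction is already given by the first part. For the reverse inclusion $W(L^{\Phi_1}, L^{\Phi}) \subseteq W(L^{\Phi_2}, L^{\Phi})$ the plan is first to use $\Phi \asymp \Phi_2$ to replace the global $L^{\Phi}$ by $L^{\Phi_2}$ up to equivalent norms, reducing the claim to $W(L^{\Phi_1}, L^{\Phi_2}) \asymp W(L^{\Phi_2}, L^{\Phi_2})$. Next, Proposition~\ref{thm6} applied to $\Phi_2$ collapses the right-hand side to $L^{\Phi_2}$, so the task becomes identifying $W(L^{\Phi_1}, L^{\Phi_2})$ with $L^{\Phi_2}$. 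The inclusion $L^{\Phi_2} \subseteq W(L^{\Phi_1}, L^{\Phi_2})$ is immediate from the same local embedding used in the first step. For the reverse inclusion I would rearrange the iterated integrals defining $\|F_f^{\Phi_1}\|_{L^{\Phi_2}}$ and exploit the matching between the local Young function ($\Phi_1 \prec \Phi_2$) and the global one ($\Phi_2$) to recover $\|f\|_{L^{\Phi_2}}$ from the $L^{\Phi_2}$-integrability of the control function $F_f^{\Phi_1}$.

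The main obstacle lies in this last step. On any individual compact set $K$ the local embedding runs the wrong way, $L^{\Phi_2}(K) \hookrightarrow L^{\Phi_1}(K)$, so the required reverse estimate cannot come from a pointwise comparison of control functions. It has to be produced at the global level, by combining the matching of local and global Young functions provided by $\Phi \asymp \Phi_2$, the collapse of the diagonal amalgam in Proposition~\ref{thm6}, and a careful rearrangement of the nested integrals in the amalgam norm.
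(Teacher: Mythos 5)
The paper itself contains no proof of this proposition --- it is quoted from \cite{ar-oz2} --- so your argument can only be judged on its own terms. Your first half is correct and is surely the intended argument: $\Phi_1 \prec \Phi_2$ gives a continuous embedding $L^{\Phi_2}(K) \hookrightarrow L^{\Phi_1}(K)$ on finite-measure sets, with a constant depending only on $|K|$ and on the constants $c$, $x_0$ in the relation $\prec$ (the standard embedding theorem for Orlicz spaces over finite measure spaces in Rao--Ren); since $|Q+x|=|Q|$ this yields the uniform pointwise bound $F_f^{\Phi_1}(x)\leq C\,F_f^{\Phi_2}(x)$, and solidity of $L^{\Phi}(\Rd)$ finishes the job.

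The second half has a genuine gap, and the route you chose cannot be repaired. Taking the printed hypothesis ``$\Phi\asymp\Phi_2$'' at face value, your reduction leads to the claim $W(L^{\Phi_1}(\Rd),L^{\Phi_2}(\Rd))\subseteq L^{\Phi_2}(\Rd)$ whenever $\Phi_1\prec\Phi_2$, and this is false: already for $\Phi_1(x)=x$ and $\Phi_2(x)=\Phi(x)=x^2$, any $f$ supported in $[0,1]^d$ with $f\in L^1\setminus L^2$ has a bounded, compactly supported control function $F_f^{\Phi_1}$, hence lies in $W(L^1,L^2)$, yet $f\notin L^2=W(L^2,L^2)$. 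As you observed yourself, the local embedding runs only one way, and no rearrangement of the iterated norms will reverse it --- the obstacle you flag at the end is not a technical difficulty but an impossibility. The assertion only becomes true if the additional hypothesis is read as $\Phi_1\asymp\Phi_2$ (almost certainly what is meant, the displayed ``$\Phi$'' being a typo for ``$\Phi_1$''): then $\Phi_2\prec\Phi_1$ as well, and the reverse inclusion follows at once by applying the first half with the roles of $\Phi_1$ and $\Phi_2$ exchanged --- no interpolation, no collapse of the diagonal amalgam, and no duality are needed. Note also that your appeal to Proposition \ref{thm6} is illegitimate in any case, since that proposition requires ${\Phi'}_{+}(0)>0$ and ${\Psi'}_{+}(0)>0$, hypotheses not assumed in the present statement.
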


The H\"{o}lder inequality for $W(L^{\Phi_1}(\Rd), L^{\Phi_2}(\Rd))$ can be formulated as follows.

	Let $(\Phi_1, \Psi_1), (\Phi_2, \Psi_2)$ be complementary Young pairs. Then
	$$\|fg\|_{W(L^1, L^1)}^{\circ} \leq 4 \|f\|_{W(L^{\Phi_1}, L^{\Phi_2})}^{\circ} \|g\|_{W(L^{\Psi_1}, L^{\Psi_2})}^{o},$$
	for all $f \in W(L^{\Phi_1}(\Rd), L^{\Phi_2}(\Rd))$ and $g \in W(L^{\Psi_1}(\Rd), L^{\Psi_2}(\Rd))$.

\begin{proposition}\label{dual}
    Let $(\Phi_1, \Psi_1)$, $(\Phi_2, \Psi_2)$ be complementary Young pairs. If $\Phi_1, \Phi_2 \in \Delta_2$, then the dual space of $W(L^{\Phi_1}(\Rd), L^{\Phi_2}(\Rd))$ is $W(L^{\Psi_1}(\Rd), L^{\Psi_2}(\Rd))$.
\end{proposition}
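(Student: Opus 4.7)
The plan is to establish both inclusions $W(L^{\Psi_1}, L^{\Psi_2}) \hookrightarrow W(L^{\Phi_1}, L^{\Phi_2})^*$ and $W(L^{\Phi_1}, L^{\Phi_2})^* \hookrightarrow W(L^{\Psi_1}, L^{\Psi_2})$ through the natural pairing $\langle f,g\rangle = \int_{\Rd} f(x)g(x)\,dx$. The first inclusion is immediate from the H\"older inequality for Orlicz amalgams stated just above: for $g \in W(L^{\Psi_1}, L^{\Psi_2})$,
$$\|fg\|_{W(L^1, L^1)}^{\circ} \leq 4 \|f\|_{W(L^{\Phi_1}, L^{\Phi_2})}^{\circ} \|g\|_{W(L^{\Psi_1}, L^{\Psi_2})}^{\circ},$$
combined with the identification $W(L^1, L^1) = L^1(\Rd)$ with equivalent norms, which follows from the direct Fubini computation $\int \|f\chi_{Q+x}\|_{L^1}\,dx = |Q|\,\|f\|_{L^1}$. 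Hence $\phi_g(f) := \int fg$ defines a bounded linear functional, and injectivity of $g \mapsto \phi_g$ is standard.

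For the reverse inclusion, fix a bounded uniform partition of unity $\{\psi_n\}_{n \in \Z^d}$ subordinate to $\{Q+n\}$, so that $\sum_n \psi_n \equiv 1$ with $\mathrm{supp}\,\psi_n \subset Q+n$. Given $\phi \in W(L^{\Phi_1}, L^{\Phi_2})^*$, I would first observe that for each $n$ the map $u \mapsto \phi(u\psi_n)$ is a continuous linear functional on $L^{\Phi_1}(Q+n)$, because $\|u\psi_n\|_{W(L^{\Phi_1}, L^{\Phi_2})}$ is controlled by a constant (depending only on finitely many neighbouring boxes and the local $L^{\Phi_2}$-mass) times $\|u\|_{L^{\Phi_1}(Q+n)}$. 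The hypothesis $\Phi_1 \in \Delta_2$ gives $L^{\Phi_1}(Q+n) = \mathcal{S}^{\Phi_1}(Q+n)$ and hence $L^{\Phi_1}(Q+n)^* = L^{\Psi_1}(Q+n)$, so there exists $g_n \in L^{\Psi_1}(Q+n)$ with $\phi(u\psi_n) = \int u g_n$ for every $u$ supported in $Q+n$. Define $g := \sum_n g_n$; since the sum is locally finite this produces a well-defined locally integrable function, and summing over $n$ yields $\phi(f) = \int fg$ on a dense subspace of $W(L^{\Phi_1}, L^{\Phi_2})$, where density of compactly supported functions uses $\Phi_1, \Phi_2 \in \Delta_2$ and the continuity of translation from Proposition \ref{prop3.3}(ii).

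The main obstacle is verifying that $g$ lies in $W(L^{\Psi_1}, L^{\Psi_2})$ with $\|g\|_{W(L^{\Psi_1}, L^{\Psi_2})} \lesssim \|\phi\|$, i.e., that the control function $F_g^Q$ belongs to $L^{\Psi_2}(\Rd)$. The approach is to pass to a discretized norm $\|g\|_{W(L^{\Psi_1}, L^{\Psi_2})} \asymp \|(\|g\chi_{Q+n}\|_{L^{\Psi_1}})_n\|_{\ell^{\Psi_2}}$ and test $F_g$ against an arbitrary $h \in L^{\Phi_2}$ with $\|h\|_{L^{\Phi_2}}^\circ \leq 1$: using the local Orlicz duality to pick, on each $Q+n$, a norming element $u_n \in L^{\Phi_1}(Q+n)$ for $g_n$, one assembles a test function $f = \sum_n c_n u_n \psi_n$ (with coefficients $c_n$ extracted from $h$) whose amalgam norm is controlled by $\|h\|_{L^{\Phi_2}}^\circ$, and for which $\phi(f) = \int fg$ reproduces the dual pairing $\langle F_g, h\rangle$ up to constants. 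The step that requires $\Phi_2 \in \Delta_2$ is precisely the final passage from the supremum of such pairings to the $L^{\Psi_2}$-norm of $F_g$, via $(L^{\Phi_2})^* = L^{\Psi_2}$, thereby closing the estimate $\|g\|_{W(L^{\Psi_1}, L^{\Psi_2})} \lesssim \|\phi\|$ and completing the proof.
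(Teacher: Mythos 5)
The paper does not actually prove Proposition \ref{dual}: it is listed among Propositions \ref{prop3.3}--\ref{obsr.} whose proofs are deferred to \cite{ar-oz2}, so there is no in-paper argument to compare yours against. Judged on its own, your proposal is the standard localization proof of amalgam duality (H\"older's inequality plus $W(L^1,L^1)=L^1$ for the easy inclusion; a bounded uniform partition of unity to split a functional into local pieces, local Orlicz duality $L^{\Phi_1}(Q+n)^*=L^{\Psi_1}(Q+n)$ under $\Phi_1\in\Delta_2$, and reassembly of the global norm), and the outline is sound. Two points need tightening. First, the final norm estimate is cleaner if you work entirely with the discrete norm of Proposition \ref{obsr.} rather than with the control function $F_g$: pick norming elements $u_n$ for the $g_n$ in the local duality, test against finitely supported sequences $(c_n)$ with $\sum_n\Phi_2(c_n)\le 1$, and observe that the supremum of $\sum_n c_n\|g_n\|_{L^{\Psi_1}}$ over such sequences \emph{is}, by definition of the Orlicz norm on $\ell^{\Psi_2}$, the quantity you want; its finiteness already yields membership in $\ell^{\Psi_2}$ for an arbitrary Young function, so, contrary to what you write, no $\Delta_2$ condition on $\Phi_2$ is consumed at that stage. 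Second, the place where $\Phi_2\in\Delta_2$ (together with $\Phi_1\in\Delta_2$) is genuinely indispensable is the density step you mention only in passing: you must verify that $\sum_{|n|\le N}f\psi_n\to f$ in $W(L^{\Phi_1},L^{\Phi_2})$, i.e.\ that compactly supported functions are dense, since otherwise the identity $\phi(f)=\int fg$ is established only on a subspace that need not determine $\phi$. With those two points made precise the argument closes; uniformity in $n$ of the constant in $\|u\psi_n\|_{W(L^{\Phi_1},L^{\Phi_2})}\lesssim\|u\|_{L^{\Phi_1}(Q+n)}$ follows from the translation invariance in Proposition \ref{prop3.3} and is not an issue.
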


Finally, we recall an equivalent discrete-type norm on the  Wiener amalgam space of Orlicz type.

\begin{proposition}\label{obsr.}
	Let  $Q =[0,1]^d$ and let $\Phi_1, \Phi_2$ be any Young functions. Then, we have
	\begin{equation}\label{denk}
	\|f\|_{W(L^{\Phi_1}, L^{\Phi_2})} \asymp \big\|\big(\|f \chi_{_{\scriptstyle Q+k}} \|_{L^{\Phi_1}} \big)_{k \in \mathbb{Z}^d} \big\|_{{\ell}^{\Phi_2}}.
	\end{equation}
 Here, ${\ell}^{\Phi_2}$ is the Orlicz sequence space which is given by \cite{L-T, Rao}.
	\end{proposition}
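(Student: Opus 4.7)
The plan is to sandwich the continuous control function $F_f = F_f^Q$ (with $Q=[0,1]^d$) between two functions naturally associated to the sequence $a_k := \|f\chi_{_{\scriptstyle Q+k}}\|_{L^{\Phi_1}}$, $k \in \mathbb{Z}^d$, and then identify the $L^{\Phi_2}$-norm of the resulting piecewise constant function with the $\ell^{\Phi_2}$-norm of $(a_k)$. The key auxiliary object is
$$G(x) := \sum_{\ell \in \mathbb{Z}^d} a_\ell \, \chi_{_{\scriptstyle Q+\ell}}(x), \qquad x \in \Rd.$$
Since $|Q|=1$ and the cubes $Q+\ell$ are pairwise disjoint up to null sets, the Luxemburg definition immediately gives
$$\|G\|_{L^{\Phi_2}}^{\circ} = \inf\Bigl\{\lambda>0 : \sum_{\ell\in\mathbb{Z}^d}\Phi_2(a_\ell/\lambda) \leq 1\Bigr\} = \|(a_\ell)\|_{\ell^{\Phi_2}}^{\circ},$$
which by \eqref{eq.norm} promotes to the equivalence $\|G\|_{L^{\Phi_2}} \asymp \|(a_\ell)\|_{\ell^{\Phi_2}}$.

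For the upper bound $\|F_f\|_{L^{\Phi_2}} \lesssim \|(a_\ell)\|_{\ell^{\Phi_2}}$, I would use the elementary cube geometry
$$Q + x \;\subset\; [k,k+2]^d \;=\; \bigcup_{j\in\{0,1\}^d}(Q+k+j), \qquad x \in Q + k,$$
together with solidity and the triangle inequality for $\|\cdot\|_{L^{\Phi_1}}$, to deduce $F_f(x) \leq \sum_{j\in\{0,1\}^d} G(x+j)$ for a.e. $x$. Taking the $L^{\Phi_2}$-norm and using translation invariance of the Orlicz norm then yields $\|F_f\|_{L^{\Phi_2}} \leq 2^d \|G\|_{L^{\Phi_2}}$, and composing with the identification above finishes this direction.

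For the reverse inequality, I would exploit the independence of the amalgam norm on the choice of compact reference set established in \cite{ar-oz2}. Enlarging $Q$ to $Q' = [-1,2]^d$, for each $x \in Q+k$ the cube $Q' + x = [x-1,x+2]^d$ contains $[k,k+1]^d = Q+k$; solidity of $\|\cdot\|_{L^{\Phi_1}}$ then gives
$$G(x) = a_k = \|f\chi_{_{\scriptstyle Q+k}}\|_{L^{\Phi_1}} \leq \|f\chi_{_{\scriptstyle Q'+x}}\|_{L^{\Phi_1}} = F_f^{Q'}(x),$$
i.e.\ $G \leq F_f^{Q'}$ pointwise a.e. Applying the solid norm $\|\cdot\|_{L^{\Phi_2}}$ and the norm equivalence $\|F_f^{Q'}\|_{L^{\Phi_2}} \asymp \|f\|_{W(L^{\Phi_1}, L^{\Phi_2})}$ closes the argument.

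The only subtle step is the identification of $\|G\|_{L^{\Phi_2}}$ with the discrete Orlicz sequence norm; it hinges on $|Q|=1$ and on the compatibility of the Luxemburg definitions of $L^{\Phi_2}$ and $\ell^{\Phi_2}$. Everything else is routine cube geometry plus repeated invocation of solidity and translation invariance of Orlicz norms, both recalled in Sections \ref{sec2} and \ref{sec3}. A minor technicality is that the covering of $[k,k+2]^d$ by the $2^d$ unit cubes $Q+k+j$ overlaps only on a set of measure zero, so the triangle inequality and solidity apply without loss.
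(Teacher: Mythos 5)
Your argument is correct. Note, however, that the paper itself does not prove Proposition \ref{obsr.}: it is quoted from \cite{ar-oz2} (``We refer to \cite{ar-oz2} for the proofs of Propositions \ref{prop3.3} -- \ref{obsr.}''), so there is no in-text proof to compare against; your write-up is the standard covering argument for discretizing amalgam norms and is almost certainly what the cited reference does. All three steps check out: the identification $\|G\|_{L^{\Phi_2}}^{\circ}=\|(a_k)\|_{\ell^{\Phi_2}}^{\circ}$ is exact because $|Q+\ell|=1$ and the cubes overlap in a null set; the bound $F_f(x)\leq\sum_{j\in\{0,1\}^d}G(x+j)$ plus translation invariance gives the factor $2^d$; and $G\leq F_f^{Q'}$ with $Q'=[-1,2]^d$ gives the converse. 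The only place you lean on an external fact is the equivalence $\|F_f^{Q'}\|_{L^{\Phi_2}}\asymp\|f\|_{W(L^{\Phi_1},L^{\Phi_2})}$ (independence of the window $Q$), which the paper also only asserts with a citation; if you want the proof fully self-contained, observe that $Q+x\subset Q'+x$ gives $F_f^{Q}\leq F_f^{Q'}$ pointwise, while covering $Q'+x$ by the $3^d$ cubes $Q+x+j$, $j\in\{-1,0,1\}^d$, gives $F_f^{Q'}(x)\leq\sum_{j}F_f^{Q}(x+j)$ and hence $\|F_f^{Q'}\|_{L^{\Phi_2}}\leq 3^d\|F_f^{Q}\|_{L^{\Phi_2}}$ by the same solidity-plus-translation-invariance device you already use.
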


\section{Main Results} \label{sec4}

In this section, we study dilation properties of Wiener amalgam spaces of Orlicz type
$W(L^{\Phi_1}(\Rd), L^{\Phi_2}(\Rd))$, and extend the corresponding results from \cite{Cor-Nic}.
We note that our proof contain nontrivial modifications of the technique used in \cite{Cor-Nic}.
In particular, we  use dilation properties of Orlicz spaces given in \cite{Os-Blasco}, and interpolation of Orlicz spaces, cf \cite{Maligranda}.

Let $f_\lambda (x) = f(\lambda x) $, $\lambda >0$, $x \in \Rd$.
The first  estimates for dilations on Wiener amalgam spaces of Orlicz type are given in the next Lemma.

\begin{lemma}\label{lem.2.5.2}
Let $\Phi_1, \Phi_2$ be two Young functions. Then,
\begin{equation}\label{eq.14}
    \|f_\lambda \|_{W(L^{\Phi_1}, L^{\Phi_2})}^{\circ}  \leq \frac{1}{\Phi_1^{-1} (\lambda^{d}) \Phi_2^{-1} (\lambda^{d})} \|f\|_{W(L^{\Phi_1}, L^{\Phi_2})}^{\circ}, \qquad 0<\lambda \leq 1,
\end{equation}
and
\begin{equation}\label{eq.15}
    \|f_\lambda \|_{W(L^{\Phi_1}, L^{\Phi_2})}^{\circ}  \leq \lambda^d \frac{1}{\Phi_1^{-1} (\lambda^{d}) \Phi_2^{-1} (\lambda^{d})} \|f\|_{W(L^{\Phi_1}, L^{\Phi_2})}^{\circ}, \qquad \lambda \geq 1.
\end{equation}
\end{lemma}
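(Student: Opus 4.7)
My plan is to work throughout with the Luxemburg norm and with the specific choice $Q=[0,1]^d$, which is admissible because the amalgam norm is independent of the compact reference set up to equivalence (cf.\ Proposition \ref{obsr.}). Setting $G(x):=\|f\chi_{Q+x}\|_{L^{\Phi_1}}^{\circ}$, the desired outer norm is $\|G\|_{L^{\Phi_2}}^{\circ}$. The whole argument hinges on the elementary identity
\begin{equation*}
f_\lambda\,\chi_{Q+x}(y)=f(\lambda y)\chi_{Q+x}(y)=D_\lambda\bigl(f\,\chi_{\lambda Q+\lambda x}\bigr)(y),
\end{equation*}
which moves the dilation inside the truncation at the price of enlarging (or shrinking) the cube from $Q$ to $\lambda Q$ and shifting the base point from $x$ to $\lambda x$.

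Applying the $L^{\Phi_1}$-dilation estimate \eqref{eq.dil} to this identity I would obtain, with $H(y):=\|f\chi_{\lambda Q+y}\|_{L^{\Phi_1}}^{\circ}$,
\begin{equation*}
\|f_\lambda\chi_{Q+x}\|_{L^{\Phi_1}}^{\circ}\le \frac{1}{\Phi_1^{-1}(\lambda^{d})}\,\|f\chi_{\lambda Q+\lambda x}\|_{L^{\Phi_1}}^{\circ}=\frac{1}{\Phi_1^{-1}(\lambda^{d})}\,H(\lambda x)=\frac{1}{\Phi_1^{-1}(\lambda^{d})}\,D_\lambda H(x).
\end{equation*}
Taking the outer Luxemburg norm in $x$ and invoking \eqref{eq.dil} a second time, but now in $L^{\Phi_2}$, gives
\begin{equation*}
\|f_\lambda\|_{W(L^{\Phi_1},L^{\Phi_2})}^{\circ}\le \frac{1}{\Phi_1^{-1}(\lambda^{d})}\,\|D_\lambda H\|_{L^{\Phi_2}}^{\circ}\le \frac{1}{\Phi_1^{-1}(\lambda^{d})\,\Phi_2^{-1}(\lambda^{d})}\,\|H\|_{L^{\Phi_2}}^{\circ}.
\end{equation*}
Thus the two factors $1/\Phi_i^{-1}(\lambda^{d})$ arise by using \eqref{eq.dil} once in each slot of the amalgam norm, and the remaining work is to compare $\|H\|_{L^{\Phi_2}}^{\circ}$, which is built from the compact set $\lambda Q$, with the true amalgam norm $\|G\|_{L^{\Phi_2}}^{\circ}$, which is built from $Q$.

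This last comparison is the only delicate point and splits cleanly into the two regimes of the statement. For $0<\lambda\le 1$ we have $\lambda Q\subset Q$, hence $\chi_{\lambda Q+y}\le\chi_{Q+y}$; solidity of $L^{\Phi_1}$ yields $H(y)\le G(y)$ pointwise, and solidity of $L^{\Phi_2}$ then gives $\|H\|_{L^{\Phi_2}}^{\circ}\le\|f\|_{W(L^{\Phi_1},L^{\Phi_2})}^{\circ}$, which produces \eqref{eq.14}. For $\lambda\ge 1$ the inclusion reverses, and I would instead cover $\lambda Q=[0,\lambda]^{d}$ by the $\lceil\lambda\rceil^{d}\lesssim\lambda^{d}$ unit cubes $Q+k_j$ with $k_j\in\mathbb Z^{d}\cap[0,\lambda]^{d}$; then $\chi_{\lambda Q+y}\le \sum_{j}\chi_{Q+k_j+y}$ so $H(y)\le\sum_{j}T_{-k_j}G(y)$, and the triangle inequality together with the translation invariance of $\|\cdot\|_{L^{\Phi_2}}^{\circ}$ delivers $\|H\|_{L^{\Phi_2}}^{\circ}\lesssim\lambda^{d}\,\|f\|_{W(L^{\Phi_1},L^{\Phi_2})}^{\circ}$, yielding \eqref{eq.15}. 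The principal obstacle is precisely this covering bookkeeping for $\lambda\ge 1$: one must produce the factor $\lambda^{d}$ without picking up further dependence on $\Phi_1,\Phi_2$, which is why the Luxemburg norm and the solidity/translation-invariance of Orlicz spaces have to be used uniformly throughout rather than passing to the Orlicz norm via \eqref{eq.norm}.
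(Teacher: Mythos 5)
Your proposal is correct and takes essentially the same route as the paper's proof: your identity $f_\lambda\chi_{Q+x}=D_\lambda\bigl(f\chi_{\lambda Q+\lambda x}\bigr)$ is exactly the change of variables the paper carries out inside the Luxemburg norm, your auxiliary function $H$ is the paper's control function $F_f^{g_{1/\lambda}}$, and the two regimes are settled by the same observations (the inclusion $\lambda Q\subset Q$ for $0<\lambda\le 1$, and the covering of $[0,\lambda]^d$ by $O(\lambda^d)$ integer translates of $Q$ for $\lambda\ge 1$), with both arguments invoking \eqref{eq.dil} once in each slot of the amalgam norm.
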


\begin{proof}
To compute the norm, we choose the control function $g$ as $g=\chi_{_{\scriptstyle [0,1]^d}}$, the characteristic function of the box $Q=[0,1]^d$. Then, we have
$$\|f_\lambda \|_{W(L^{\Phi_1}, L^{\Phi_2})}^{\circ}=\|F_{_{ \scriptstyle f_\lambda}}^{g} \|_{L^{\Phi_2}}^{\circ}=\big\| \|f_\lambda T_x g\|_{L^{\Phi_1}}^{\circ} \big\|_{L^{\Phi_2}}^{\circ}.$$

We first estimate the norm of the local component $\|\cdot\|_{L^{\Phi_1}}^{\circ}$. We have
\begin{align}\label{eq.1}
\|f_\lambda T_x g\|_{L^{\Phi_1}}^{\circ}&= \inf\Big\{k>0: \int_{\Rd} \Phi_1\big(\frac{|f(\lambda t) g(t-x)|}{k} \big) dt \leq 1\Big\} \nonumber \\
&= \inf\Big\{k>0: \int_{\Rd} \Phi_1\big(\frac{|f(t)g(t/{\lambda} -x)|}{k} \big) \frac{dt}{\lambda^d} \leq 1\Big\} \nonumber \\
&= \inf\Big\{k>0: \int_{\Rd} \Phi_1\big(\frac{|f(t)g_{_{\scriptstyle 1/{\lambda}}}(t-\lambda x)|}{k} \big) \frac{dt}{\lambda^d} \leq 1\Big\}. \end{align}

Let $ 0<\lambda \leq 1$.
By \eqref{eq.dil}, we obtain
\begin{align*}
\|f_\lambda T_x g\|_{L^{\Phi_1}}^{\circ}&=
\inf\Big\{k>0: \int_{\Rd} \Phi_1\big(\frac{|f(t)g(t-\lambda x)|}{k} \big) dt \leq \lambda^d \Big\} \nonumber \\
&= \|f T_{\lambda x} g\|_{L^{\Phi_1}}^{\circ, \lambda} \nonumber\\
&\leq \frac{1}{\Phi_1^{-1}(\lambda^{d})} \|f T_{\lambda x} g\|_{L^{\Phi_1}}^{\circ} \nonumber \\
&= \frac{1}{\Phi_1^{-1}(\lambda^{d})} F_{f}^{g} (\lambda x) =\frac{1}{\Phi_1^{-1}(\lambda^{d})} D_{\lambda} F_{f}^{g} (x),
\end{align*}
where the window function $g$ fulfills $g_{_{\scriptstyle 1/{\lambda}}} \leq g$ for $0< \lambda \leq 1$. By the solidity of $L^{\Phi_2}(\Rd)$ and \eqref{eq.dil} we have
$$\|F_{_{ \scriptstyle f_\lambda}}^{g} \|_{L^{\Phi_2}}^{\circ} \leq \frac{1}{\Phi_1^{-1}(\lambda^{d})} \|D_{\lambda} F_{f}^{g} \|_{L^{\Phi_2}}^{\circ} \leq \frac{1}{\Phi_1^{-1}(\lambda^{d})} \frac{1}{\Phi_2^{-1}(\lambda^{d})} \|F_{f}^{g} \|_{L^{\Phi_2}}^{\circ}$$
which gives
$$\|f_\lambda \|_{W(L^{\Phi_1}, L^{\Phi_2})}^{\circ}  \lesssim \frac{1}{\Phi_1^{-1} (\lambda^{d}) \Phi_2^{-1} (\lambda^{d})} \|f\|_{W(L^{\Phi_1}, L^{\Phi_2})}^{\circ},$$
when  $ 0<\lambda \leq 1$. Next we calculate the norm of local component $\|\cdot\|_{L^{\Phi_1}}^{\circ}$ for $\lambda \geq 1$. By \eqref{eq.1}, we have
\begin{align*}
\|f_\lambda T_x g\|_{L^{\Phi_1}}^{\circ}
& \leq \|f T_{\lambda x} g_{_{\scriptstyle 1/{\lambda}}}\|_{L^{\Phi_1}}^{\circ, \lambda} \nonumber\\
&\leq \frac{1}{\Phi_1^{-1}(\lambda^{d})} \|f T_{\lambda x} g_{_{\scriptstyle 1/{\lambda}}}\|_{L^{\Phi_1}}^{\circ} \nonumber\\
&= \frac{1}{\Phi_1^{-1}(\lambda^{d})} F_{f}^{g_{_{\scriptstyle 1/{\lambda}}}} (\lambda x) \nonumber \\
&= \frac{1}{\Phi_1^{-1}(\lambda^{d})} D_{\lambda} F_{f}^{g_{_{\scriptstyle 1/{\lambda}}}} (x).
\end{align*}

 We proceed by estimating the global component. By the solidity of $L^{\Phi_2}(\Rd)$ and \eqref{eq.dil} we have
$$\|F_{_{ \scriptstyle f_\lambda}}^{g} \|_{L^{\Phi_2}}^{\circ} \leq \frac{1}{\Phi_1^{-1}(\lambda^{d})} \|D_{\lambda} F_{f}^{g_{_{\scriptstyle 1/{\lambda}}}} \|_{L^{\Phi_2}}^{\circ} \leq \frac{1}{\Phi_1^{-1}(\lambda^{d})} \frac{1}{\Phi_2^{-1}(\lambda^{d})} \|F_{f}^{g_{_{\scriptstyle 1/{\lambda}}}} \|_{L^{\Phi_2}}^{\circ}.$$

Thus, we finally obtain
\begin{align*}
 \|f_\lambda \|_{W(L^{\Phi_1}, L^{\Phi_2})}^{\circ}&\leq \frac{1}{\Phi_1^{-1}(\lambda^{d})} \frac{1}{\Phi_2^{-1}(\lambda^{d})} \big\| \|f(t) T_x g_{_{\scriptstyle 1/{\lambda}}}(t)\|_{L_{(t)}^{\Phi_1}}^{\circ} \big\|_{L_{(x)}^{\Phi_2}}^{\circ} \\
 &= \frac{1}{\Phi_1^{-1}(\lambda^{d})} \frac{1}{\Phi_2^{-1}(\lambda^{d})}\big\| \|f(t)  g_{_{\scriptstyle 1/{\lambda}}}(t-x)\|_{L_{(t)}^{\Phi_1}}^{\circ} \big\|_{L_{(x)}^{\Phi_2}}^{\circ} \\
 &\leq  \frac{1}{\Phi_1^{-1}(\lambda^{d})} \frac{1}{\Phi_2^{-1}(\lambda^{d})} \Big\| \Big\|  \sum_{j \in \mathbb{Z} \cap [0,\lambda]^d} f(t) g(t-(x+j))\Big\|_{L_{(t)}^{\Phi_1}}^{\circ} \Big\|_{L_{(x)}^{\Phi_2}}^{\circ} \\
 &= \frac{1}{\Phi_1^{-1}(\lambda^{d})} \frac{1}{\Phi_2^{-1}(\lambda^{d})}\Big\| \sum_{j \in \mathbb{Z} \cap [0,\lambda]^d} F_{f}^{g} (x+j)\Big\|_{L_{(x)}^{\Phi_2}}^{\circ} \\
 &\leq \lambda^d \frac{1}{\Phi_1^{-1}(\lambda^{d})} \frac{1}{\Phi_2^{-1}(\lambda^{d})} \| F_{f}^{g} \|_{L^{\Phi_2}}^{\circ} \\
  &= \lambda^d \frac{1}{\Phi_1^{-1} (\lambda^{d})\Phi_2^{-1} (\lambda^{d})} \|f\|_{W(L^{\Phi_1}, L^{\Phi_2})}^{\circ},
\end{align*}
 where $g_{_{\scriptstyle 1/{\lambda}}}(t) \leq \sum_{j \in \mathbb{Z} \cap [0,\lambda]^d} g(t-j)$ holds for $\lambda \geq 1$. Notice that the sum contains $N_{\lambda}=O(\lambda^d)$ terms (see \cite{Cor-Nic}).
\end{proof}

When restricting to Lebesgue spaces, i.e. when
 $\Phi_1(x)=x^p$ and $\Phi_2(x)=x^q$ for $1 \leq p,q\leq \infty$ (and therefore $\Phi_1^{-1}(\lambda^{d})=\lambda^{d/p}$ and $\Phi_2^{-1}(\lambda^{d})=\lambda^{d/q}$ for $\lambda >0$, \cite{R-R2}) we recover \cite[Lemma 2.3]{Cor-Nic} (see also \cite[Lemma 2.5.2]{Cor-Rod}):

\begin{corollary}
Let $1 \leq p,q \leq \infty$, and let $W(L^p (\Rd), L^q (\Rd))$ be the Wiener amalgam space. Then we have:
\begin{equation*}
    \|f_\lambda \|_{W(L^p, L^q)} \lesssim \lambda^{-d(1/p +1/q)} \|f\|_{W(L^p, L^q)}, \qquad \forall ~~ 0<\lambda \leq 1,
\end{equation*}
and
\begin{equation*}
    \|f_\lambda \|_{W(L^p, L^q)}  \lesssim \lambda^{d(1-1/p -1/q)} \|f\|_{W(L^p, L^q)}, \qquad \forall ~~\lambda \geq 1.
\end{equation*}
\end{corollary}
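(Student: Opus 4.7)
The plan is to specialize Lemma \ref{lem.2.5.2} to the Young functions $\Phi_1(x)=x^p$ and $\Phi_2(x)=x^q$ for $1\le p,q<\infty$, and then handle the endpoint cases $p=\infty$ or $q=\infty$ separately using the Young functions described in Section \ref{sec2}. First I would invoke the discussion in Section \ref{sec2} recalling that for $\Phi(x)=x^p$ the Orlicz space $L^{\Phi}(\Rd)$ equals the Lebesgue space $L^p(\Rd)$ with $\|\cdot\|_{L^{\Phi}}\asymp\|\cdot\|_{L^p}$; via \eqref{eq.norm} and \eqref{normeq} the same identification transfers to the Wiener amalgam level, so that $W(L^{\Phi_1}(\Rd),L^{\Phi_2}(\Rd))$ coincides with $W(L^p(\Rd),L^q(\Rd))$ under equivalent norms (both for the Orlicz and Luxemburg versions).

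Next I would compute the generalized inverses explicitly. For $\Phi(y)=y^p$ with $1\le p<\infty$ one has $\Phi^{-1}(y)=y^{1/p}$, and consequently
\[
\Phi_1^{-1}(\lambda^d)=\lambda^{d/p},\qquad \Phi_2^{-1}(\lambda^d)=\lambda^{d/q},\qquad \lambda>0.
\]
Substituting these into the right-hand side of \eqref{eq.14} yields the bound $\lambda^{-d/p-d/q}\,\|f\|_{W(L^p,L^q)}$ for $0<\lambda\le 1$, while substituting into \eqref{eq.15} gives
\[
\lambda^d\cdot\lambda^{-d/p-d/q}\,\|f\|_{W(L^p,L^q)}=\lambda^{d(1-1/p-1/q)}\,\|f\|_{W(L^p,L^q)},\qquad \lambda\ge 1,
\]
which is exactly the asserted estimates (up to the constant absorbed in $\lesssim$, which accounts for the norm equivalences above).

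For the endpoint cases $p=\infty$ or $q=\infty$, I would use the Young function $\Psi$ in \eqref{yf}, for which $L^{\Psi}(\Rd)=L^{\infty}(\Rd)$ isometrically; the relevant ``inverse'' then satisfies $\Psi^{-1}(\lambda^d)=1$, so the formal convention $\lambda^{d/\infty}=1$ makes the formulas in Lemma \ref{lem.2.5.2} continuous in $p,q$ at the endpoints. I would either verify these endpoints by a direct inspection of the proof of Lemma \ref{lem.2.5.2} (where $C_{\Phi}(\lambda)=1$ for $L^\infty$ since the dilation operator is an isometry on $L^{\infty}$), or simply quote the well-known classical dilation behavior on $W(L^p,L^q)$ for these cases. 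The main obstacle is only bookkeeping: one has to keep track of the equivalence constants coming from \eqref{eq.norm} and \eqref{normeq} when passing from the Luxemburg estimates of Lemma \ref{lem.2.5.2} to statements involving the (equivalent) Lebesgue norm appearing in the corollary, and to check that the limiting values $p=\infty$ or $q=\infty$ cause no degeneration. Beyond this, the corollary is a direct specialization of Lemma \ref{lem.2.5.2}.
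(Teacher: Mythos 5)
Your proposal is correct and follows essentially the same route as the paper, which obtains the corollary by substituting $\Phi_1(x)=x^p$, $\Phi_2(x)=x^q$ with $\Phi_i^{-1}(\lambda^d)=\lambda^{d/p},\lambda^{d/q}$ into Lemma \ref{lem.2.5.2} and absorbing the norm-equivalence constants into $\lesssim$. Your additional care with the endpoint Young function \eqref{yf}, for which the generalized inverse is identically $1$, is consistent with the paper's implicit convention $\lambda^{d/\infty}=1$.
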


The estimates given in Lemma \ref{lem.2.5.2} can be improved by using the interpolation arguments in a similar was as it is done in \cite{Cor-Nic}. However, instead of the classical interpolation between Lebesgue type spaces, we should combine the interpolation of Orlicz spaces with the properties of Wiener amalgam spaces.

To that end we first recall the interpolation result for Orlicz spaces
which is given in \cite[Lemma 14.2]{Maligranda} (see \cite{B-S, Calderon} for more details and for
the interpolation notation).

\begin{lemma}\label{lem.inter.orlicz}
    Let $\Phi_0, \Phi_1$ be any Young functions. Then, the function $\Phi$ defined by
    \begin{equation}\label{inter.orlicz}
        \Phi^{-1} (x)=\Phi_0^{-1}(u) \rho\big(\frac{\Phi_1^{-1} (u)}{\Phi_0^{-1}(u)}\big)
    \end{equation}
    is a Young function and $L^{\Phi}(\Rd)= [L^{\Phi_0}(\Rd), L^{\Phi_1}(\Rd)]_{\rho}$, where
     $\rho: [0,\infty) \to [0,\infty)$ is concave, continuous, positive on $(0,\infty)$ and such that
     $$\rho(s) \leq \max\{\frac{s}{t}, 1\}\rho(t), \qquad s,t>0.$$
\end{lemma}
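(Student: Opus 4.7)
The plan is to prove the lemma in two stages: first that the implicitly defined $\Phi$ is genuinely a Young function, and then that the Orlicz space $L^{\Phi}(\mathbb{R}^d)$ coincides with the Calderón--Lozanovskii interpolation space $[L^{\Phi_0}(\mathbb{R}^d), L^{\Phi_1}(\mathbb{R}^d)]_{\rho}$.

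For the first stage I would introduce the two-variable perspective function $\psi(s,t) := s\rho(t/s)$, so that the defining relation reads $\Phi^{-1}(u) = \psi(\Phi_0^{-1}(u), \Phi_1^{-1}(u))$. The hypothesis $\rho(s) \leq \max(s/t,1)\rho(t)$ is designed to encode exactly two properties: taking $s<t$ shows $\rho$ is nondecreasing, while taking $s>t$ shows $s \mapsto \rho(s)/s$ is nonincreasing. Together with the concavity of $\rho$, these ensure that $\psi$ is concave on $(0,\infty)^2$ and nondecreasing in each argument. Since $\Phi_0^{-1}, \Phi_1^{-1}$ are themselves concave nondecreasing functions on $[0,\infty)$ vanishing at the origin, $\Phi^{-1}$ inherits concavity, monotonicity, $\Phi^{-1}(0)=0$, and (using $\rho(r)\geq \rho(1)$ for $r\geq 1$) $\lim_{u\to\infty}\Phi^{-1}(u)=\infty$. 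This forces $\Phi$ to be convex with $\Phi(0)=0$ and unbounded, so it is indeed a Young function.

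For the second stage I would use the Calderón--Lozanovskii construction, in which $[L^{\Phi_0}, L^{\Phi_1}]_{\rho}$ consists of measurable $f$ admitting a pointwise majorisation
$$|f(x)| \leq c\,\psi(|f_0(x)|, |f_1(x)|) \text{ a.e., with } \|f_i\|_{L^{\Phi_i}}^{\circ} \leq 1,$$
normed by the infimum of such $c$. The bridge to $L^{\Phi}$ is the \emph{level-set identity}: by the very definition of $\Phi^{-1}$, whenever $\Phi_0(s) = \Phi_1(t) = u$ one has $\Phi(\psi(s,t)) = u$. Combined with the joint monotonicity of $\psi$ established in Stage 1, this upgrades to the pointwise estimate
$$\Phi(\psi(s,t)) \leq \max(\Phi_0(s), \Phi_1(t)) \leq \Phi_0(s) + \Phi_1(t).$$
The inclusion $[L^{\Phi_0}, L^{\Phi_1}]_{\rho} \hookrightarrow L^{\Phi}$ then follows by integrating this inequality over any admissible decomposition. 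For the reverse inclusion, given $f$ with $\|f\|_{L^{\Phi}}^{\circ} \leq 1$, I would construct a canonical decomposition pointwise by setting $|f_i(x)| := \Phi_i^{-1}(\Phi(|f(x)|))$; the identity $\psi(|f_0|, |f_1|) = |f|$ is automatic from the definition of $\Phi^{-1}$, and $\int \Phi_i(|f_i|) = \int \Phi(|f|) \leq 1$ yields $\|f_i\|_{L^{\Phi_i}}^{\circ} \leq 1$, giving an admissible decomposition with $c=1$.

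The main obstacle I anticipate is Stage 1, specifically extracting the concavity and joint monotonicity of the perspective function $\psi(s,t) = s\rho(t/s)$ from the technical quasi-concavity hypothesis on $\rho$, and then tracking that these properties survive the composition with $\Phi_0^{-1}, \Phi_1^{-1}$. Once this structural fact is in hand, the Young-function verification is immediate, and the Orlicz-space identification reduces to the level-set identity applied through the standard Calderón--Lozanovskii decomposition, with no further interpolation machinery required.
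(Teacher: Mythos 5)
The paper does not prove this lemma at all: it is imported verbatim from the cited reference (Maligranda, \emph{Orlicz Spaces and Interpolation}, Lemma 14.2), so there is no in-paper argument to compare yours against. Judged on its own, your outline is the standard Calder\'on--Lozanovskii argument and is essentially correct: the perspective function $\psi(s,t)=s\rho(t/s)$ is jointly concave and coordinatewise nondecreasing precisely because $\rho$ is concave, nondecreasing, and has $\rho(t)/t$ nonincreasing (the two monotonicities being exactly what the hypothesis $\rho(s)\leq\max\{s/t,1\}\rho(t)$ encodes), and the key inequality $\Phi(\psi(s,t))\leq\max\{\Phi_0(s),\Phi_1(t)\}$ together with the canonical decomposition $f_i=\Phi_i^{-1}(\Phi(|f|))$ gives the two inclusions. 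Note, however, that this proves the identification with the Calder\'on--Lozanovskii space $\rho(L^{\Phi_0},L^{\Phi_1})$; that this is what the paper's symbol $[\cdot,\cdot]_\rho$ means is consistent with the cited source, but the paper later feeds the result into Feichtinger's \emph{complex} interpolation of amalgam spaces, and the equivalence of the two constructions (valid for $\rho(t)=t^\theta$ under suitable separability assumptions) is an additional fact your proof does not supply. Two small technical points to tidy up: $\Phi_i^{-1}(0)$ need not vanish for a general Young function (e.g.\ $\Phi_b$), so $\Phi^{-1}(0)=0$ should not be claimed --- only convexity, $\Phi(0)=0$ and unboundedness of $\Phi$ are needed; and with generalized inverses your ``level-set identity'' holds only as the inequalities $\Phi(\Phi^{-1}(u))\leq u\leq \Phi^{-1}(\Phi(u))$ wherever relevant, and the forward inclusion yields $\int\Phi(|f|/c)\leq 2$ rather than $1$, costing a harmless factor of $2$ in the norm equivalence.
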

In particular, when $\rho(t)=t^{\theta}$, $0< \theta <1$, we have $\Phi^{-1}=(\Phi_0^{-1})^{1-\theta} (\Phi_1^{-1})^{\theta}$.

The complex interpolation for general Wiener amalgam spaces is given in \cite{Fei2}.
We are interested here in the interpolation result for Wiener amalgam spaces of Orlicz type.
Let $[X, Y]_\rho$ denote the interpolation between certain Orlicz spaces $X$ and $Y$. Then we have the following result.

\begin{lemma}\label{amal.inter}
    Let $B_0, B_1$ be local components of Wiener type Orlicz spaces and $\Phi, \Phi_0, \Phi_1$ be Young functions satisfying the $\Delta_2$ condition, and \eqref{inter.orlicz}. Then, we have
$$[W(B_0, L^{\Phi_0}), W(B_1, L^{\Phi_1})]_\rho=W([B_0 , B_1]_\rho, [L^{\Phi_0}, L^{\Phi_1}]_\rho) =W([B_0 , B_1]_\rho, L^{\Phi})$$
\end{lemma}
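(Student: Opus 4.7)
The plan is to handle the two equalities separately. The right-hand equality $W([B_0, B_1]_\rho, [L^{\Phi_0}, L^{\Phi_1}]_\rho) = W([B_0, B_1]_\rho, L^{\Phi})$ is immediate from Lemma \ref{lem.inter.orlicz}, which identifies the interpolated global component $[L^{\Phi_0}, L^{\Phi_1}]_\rho$ with $L^{\Phi}$ for the Young function $\Phi$ given by \eqref{inter.orlicz}. The content of the lemma therefore lies in the left-hand equality, which says that the Calder\'on interpolation functor $[\cdot,\cdot]_\rho$ commutes with the Wiener amalgam construction in this Orlicz setting.

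The main idea is to realize each Wiener amalgam space $W(B, L^{\Phi})$ as a retract of the vector-valued Orlicz sequence space $\ell^{\Phi}(B)$, and then use the fact that complex interpolation respects retractions. Taking $Q = [0,1]^d$ and using the discrete-norm equivalence from Proposition \ref{obsr.},
$$\|f\|_{W(B, L^{\Phi})} \asymp \big\| \big( \|f \chi_{Q+k}\|_B \big)_{k \in \mathbb{Z}^d} \big\|_{\ell^{\Phi}}.$$
One would then fix a bounded uniform partition of unity $\{\psi_k\}_{k \in \mathbb{Z}^d}$ with $\psi_k = T_k \psi_0$, $\operatorname{supp} \psi_0$ compact, and $\sum_k \psi_k \equiv 1$, and set $R f := (\psi_k f)_{k}$ and $S (g_k)_{k} := \sum_k g_k$. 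Then $S \circ R = \operatorname{Id}$, and the solidity of the local and global components together with the finite-overlap property of the supports show that $R$ and $S$ are simultaneously bounded between $W(B_i, L^{\Phi_i})$ and $\ell^{\Phi_i}(B_i)$ for $i = 0, 1$, as well as between the corresponding interpolation spaces.

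The standard retraction theorem for Calder\'on's complex method then reduces the left-hand equality to the vector-valued sequence space identity
$$[\ell^{\Phi_0}(B_0), \ell^{\Phi_1}(B_1)]_\rho = \ell^{\Phi}([B_0, B_1]_\rho),$$
since applying $S$ to its right-hand side recovers $W([B_0, B_1]_\rho, L^{\Phi})$. This identity, in turn, would follow by repeating the scalar argument for Lemma \ref{lem.inter.orlicz} with values in the Banach space $[B_0, B_1]_\rho$; the $\Delta_2$ hypothesis on $\Phi_0$ and $\Phi_1$ is used in its usual role, guaranteeing that finitely supported vector-valued simple sequences are dense in $\ell^{\Phi_i}(B_i)$, which is what makes the Calder\'on construction behave as in the scalar case.

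The principal obstacle I anticipate is precisely this last vector-valued interpolation step. While the scalar version in \cite[Lemma 14.2]{Maligranda} is classical, its extension to sequences valued in a general Banach space $B_i$ requires verifying that both the upper bound (coming from the explicit Calder\'on extension) and the lower bound transfer to the vector setting, and that the interaction with the $\Delta_2$ condition and the solidity of the $B_i$ is sufficient to ensure the density, duality, and translation-continuity statements underlying the complex interpolation method all remain available in this generality.
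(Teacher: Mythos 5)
Your outline is a genuinely different route from what the paper does. The paper does not prove Lemma \ref{amal.inter} by hand at all: it observes that the $\Delta_2$ condition guarantees that $L^{\Phi_0}$ and $L^{\Phi_1}$ have absolutely continuous norm, and then invokes Feichtinger's general interpolation theorem for Wiener-type spaces \cite[Theorem 2.2]{Fei2}, which yields $[W(B_0,C_0),W(B_1,C_1)]_\rho = W([B_0,B_1]_\rho,[C_0,C_1]_\rho)$ directly; the second equality is then exactly Lemma \ref{lem.inter.orlicz}, as you say. What you propose instead is essentially to reprove Feichtinger's theorem in this special case: discretize via Proposition \ref{obsr.} and a BUPU, realize $W(B,L^\Phi)$ as a retract of $\ell^\Phi(B)$, and apply the retraction theorem for the complex method. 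That is the standard architecture behind results of this type, so the strategy is sound, and it has the virtue of being self-contained; the paper's citation buys brevity and pushes all the work onto \cite{Fei2}.

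The caveat is the one you yourself flag: your reduction terminates at the vector-valued identity $[\ell^{\Phi_0}(B_0),\ell^{\Phi_1}(B_1)]_\rho=\ell^{\Phi}([B_0,B_1]_\rho)$, which is not a formal consequence of the scalar Lemma \ref{lem.inter.orlicz} and is the real content of the statement. Asserting that it "follows by repeating the scalar argument with values in $[B_0,B_1]_\rho$" is not a proof: the lower (inverse) inclusion in Calder\'on-type interpolation of Orlicz spaces typically requires duality or a Calder\'on--Mityagin-type argument, and neither transfers automatically to the vector-valued setting for arbitrary Banach spaces $B_0,B_1$ (there is no $\Delta_2$-type hypothesis imposed on the local components in the lemma). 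As written, your proposal relocates the difficulty rather than resolving it; to close it you would either need to prove the vector-valued sequence-space identity in detail or, more economically, do what the paper does and cite \cite[Theorem 2.2]{Fei2} after verifying the absolute continuity of the norms.
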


Note that the Orlicz space $L^{\Phi}(\Rd)$ has absolutely continuous norm for any Young function $\Phi$ which satisfies the $\Delta_2$ condition. So, Lemma \ref{amal.inter} is a special case of \cite[Theorem 2.2]{Fei2}.

Let $ \mathcal{O}$ be the set of Young functions such that the following conditions hold:
\begin{enumerate}
    \item[i)] $\Phi_s, \Phi_b \in \mathcal{O} $;
    \item[ii)] if $\Phi_1, \Phi_2 \in \mathcal{O} $ then $\Phi_1$ is strictly stronger than  $\Phi_2$ or
    $\Phi_2$ is strictly stronger than  $\Phi_1$. In other words, either $\Phi_1 (x) \leq  \Phi_2 (x)$, $ x \geq 0$, or $\Phi_2 (x) \leq  \Phi_1 (x)$, $ x \geq 0$.
\end{enumerate}

Now we are ready to prove the main result of the paper.

\begin{proposition} \label{main.result}
    Let $\Phi_1, \Phi_2 \in \mathcal{O} $  satisfy the $\Delta_2$ condition and let
    \begin{equation} \label{positive.derivative}
       (\Phi_i)'_{+}(0)>0, \quad \text{and} \quad (\Psi_i)'_{+}(0)>0, ~~i=1, 2.
    \end{equation}
Then we have:
  \begin{equation}\label{eq.5}
    \|f_\lambda \|_{W(L^{\Phi_1}, L^{\Phi_2})}^{\circ}  \lesssim \frac{1}{\max\{\Phi_1^{-1} (\lambda^{d}), \Phi_2^{-1}(\lambda^{d})\}} \|f\|_{W(L^{\Phi_1}, L^{\Phi_2})}^{\circ}, \qquad 0<\lambda \leq 1
\end{equation}
and
\begin{equation}\label{eq.6}
   \|f_\lambda \|_{W(L^{\Phi_1}, L^{\Phi_2})}^{\circ}  \lesssim \frac{1}{\min\{\Phi_1^{-1} (\lambda^{d}), \Phi_2^{-1}(\lambda^{d})\}} \|f\|_{W(L^{\Phi_1}, L^{\Phi_2})}^{\circ}, \qquad \lambda \geq 1.
\end{equation}
\end{proposition}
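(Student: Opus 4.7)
My strategy is to refine the rough product bound of Lemma \ref{lem.2.5.2} by interpolation, using the Calderón--Lozanovskii construction of intermediate Orlicz spaces (Lemma \ref{lem.inter.orlicz}) together with the corresponding interpolation theorem for Wiener amalgam spaces of Orlicz type (Lemma \ref{amal.inter}). Because $\mathcal{O}$ is totally ordered in the strict-strength sense, I may assume without loss of generality that $\Phi_1(x) \leq \Phi_2(x)$ for every $x \geq 0$. Then $\Phi_1^{-1} \geq \Phi_2^{-1}$ on $[0,\infty)$, so that the maximum appearing in \eqref{eq.5} equals $\Phi_1^{-1}(\lambda^d)$ and the minimum in \eqref{eq.6} equals $\Phi_2^{-1}(\lambda^d)$.

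For $0 < \lambda \leq 1$ I would realize $L^{\Phi_2}$ as an intermediate space between $L^{\Phi_1}$ and $L^{\Phi_b}$ by constructing an admissible $\rho$ with
\[
\Phi_2^{-1}(u) = \Phi_1^{-1}(u)\, \rho\!\left(\frac{\Phi_b^{-1}(u)}{\Phi_1^{-1}(u)}\right), \qquad u \geq 0,
\]
as dictated by Lemma \ref{lem.inter.orlicz}. Applying Lemma \ref{amal.inter} with the local component held fixed then gives
\[
W(L^{\Phi_1}, L^{\Phi_2}) = \bigl[W(L^{\Phi_1}, L^{\Phi_1}),\, W(L^{\Phi_1}, L^{\Phi_b})\bigr]_{\rho}.
\]
By Proposition \ref{thm6} the first endpoint coincides with $L^{\Phi_1}$, on which Lemma \ref{cor.dil} (in the form \eqref{eq.dil}) supplies the dilation bound $1/\Phi_1^{-1}(\lambda^d)$. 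For the second endpoint, Lemma \ref{lem.2.5.2} gives the estimate $1/(\Phi_1^{-1}(\lambda^d)\Phi_b^{-1}(\lambda^d))$; since $\Phi_b^{-1}(\lambda^d) = \lambda^d + 1 \geq 1$ in this range, this bound is controlled by $1/\Phi_1^{-1}(\lambda^d)$ as well. The interpolation property for bounded operators on Calderón--Lozanovskii scales then transports the common bound $1/\Phi_1^{-1}(\lambda^d)$ to $W(L^{\Phi_1}, L^{\Phi_2})$, giving \eqref{eq.5}.

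For $\lambda \geq 1$ I would run the analogous construction with the roles of local and global components swapped, expressing $L^{\Phi_1}$ as $[L^{\Phi_2}, L^{\Phi_b}]_{\widetilde\rho}$ for a suitable $\widetilde\rho$ and applying Lemma \ref{amal.inter} with the global component fixed to obtain
\[
W(L^{\Phi_1}, L^{\Phi_2}) = \bigl[W(L^{\Phi_2}, L^{\Phi_2}),\, W(L^{\Phi_b}, L^{\Phi_2})\bigr]_{\widetilde\rho}.
\]
The first endpoint is $L^{\Phi_2}$, with dilation bound $1/\Phi_2^{-1}(\lambda^d)$, while Lemma \ref{lem.2.5.2} bounds the dilation on the second endpoint by $\lambda^d/(\Phi_b^{-1}(\lambda^d)\Phi_2^{-1}(\lambda^d))$, which is at most $1/\Phi_2^{-1}(\lambda^d)$ because $\lambda^d/(\lambda^d + 1) \leq 1$. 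Interpolation again delivers \eqref{eq.6}.

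The principal technical obstacle is the construction and verification of the parameters $\rho$ and $\widetilde\rho$: one must show that the maps $u \mapsto \Phi_b^{-1}(u)/\Phi_i^{-1}(u)$ are monotone on $(0,\infty)$, so that $\rho$ and $\widetilde\rho$ are unambiguously defined, and that the resulting functions are continuous, concave, positive on $(0,\infty)$, and satisfy the growth estimate $\rho(s) \leq \max\{s/t, 1\}\rho(t)$ demanded by Lemma \ref{lem.inter.orlicz}. This is precisely where the chain property of $\mathcal{O}$ and the non-degeneracy conditions \eqref{positive.derivative}, in tandem with the $\Delta_2$ hypothesis, intervene: they control the behaviour of $\Phi_i^{-1}$ near $0$ and at infinity, furnishing the needed monotonicity and regularity and licensing the use of Lemma \ref{amal.inter} in the form stated.
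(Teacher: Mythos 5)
There is a genuine gap, and it occurs at the very first step: the reduction ``without loss of generality $\Phi_1\leq\Phi_2$'' is not legitimate. The space $W(L^{\Phi_1},L^{\Phi_2})$ is not symmetric under exchanging the local and global components (the overlap count $N_\lambda=O(\lambda^d)$ in Lemma \ref{lem.2.5.2} enters only through the local component), so proving the estimates for pairs with $\Phi_1\leq\Phi_2$ says nothing about pairs with $\Phi_2\leq\Phi_1$. The paper must treat the two orderings by different methods: the ordering in which the local component is the stronger one is handled by interpolation from the endpoints $(\Phi_s,\Phi_b)$ and $(\Phi_s,\Phi_s)$, while the opposite ordering is handled by a duality argument that your proposal omits entirely --- one writes $\|f_\lambda\|^{\circ}_{W(L^{\Phi_1},L^{\Phi_2})}$ as a supremum of pairings against $g\in W(L^{\Psi_1},L^{\Psi_2})$ (Proposition \ref{dual}), substitutes $\langle f_\lambda,g\rangle=\lambda^{-d}\langle f,g_{1/\lambda}\rangle$, applies the already-proved estimate \eqref{eq.5} to $g_{1/\lambda}$ with parameter $1/\lambda\leq 1$, and then converts $1/\Psi_2^{-1}(1/\lambda^d)$ into $1/\Phi_2^{-1}(\lambda^d)$ using the concavity of $\Psi_2^{-1}$ and the inequality $u\leq\Phi^{-1}(u)\Psi^{-1}(u)<2u$. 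None of this can be recovered from your interpolation scheme.

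Moreover, the interpolation you propose for $0<\lambda\leq1$ is internally inconsistent. Under your standing assumption $\Phi_1\leq\Phi_2$ one has $\Phi_2^{-1}\leq\Phi_1^{-1}\lesssim\Phi_b^{-1}$. But any space produced by Lemma \ref{lem.inter.orlicz} from the endpoints $L^{\Phi_1}$ and $L^{\Phi_b}$ has inverse Young function $\Phi_1^{-1}(u)\,\rho\bigl(\Phi_b^{-1}(u)/\Phi_1^{-1}(u)\bigr)\geq\rho(c)\,\Phi_1^{-1}(u)$ for some $c>0$, since $\rho$ is positive, concave, hence nondecreasing, and the ratio in its argument is bounded below; this is exactly the computation the paper performs in \eqref{eq.A}--\eqref{eq.**}. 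Hence the required $\rho$ with $\Phi_2^{-1}=\Phi_1^{-1}\,\rho(\Phi_b^{-1}/\Phi_1^{-1})$ exists only when $\Phi_1^{-1}\asymp\Phi_2^{-1}$, i.e.\ in the degenerate diagonal case already covered by Proposition \ref{thm6} and \eqref{eq.dil}. Finally, even in the $\lambda\geq1$ construction, where the betweenness condition is at least consistent, you are using Lemma \ref{lem.inter.orlicz} in the converse direction (from a prescribed middle space to an admissible $\rho$), which the lemma does not assert; the paper avoids this by arguing forward from an arbitrary admissible $\rho$ applied to the fixed extreme pair $(\Phi_s,\Phi_b)$ and then identifying which $\Phi_2$ arise.
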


\begin{proof}
First we note that \eqref{positive.derivative} is needed in order to use Proposition \ref{thm6} and the $\Delta_2$ condition is needed for the complex interpolation.

Let $\Phi_1, \Phi_2$ satisfy the conditions of Proposition \ref{main.result}.  We note that
$\Phi_1$ and $\Phi_2$ give rise to intermediate spaces
between the spaces generated by $\Phi_s$ and $\Phi_b$, so the Young functions
 $\Phi_s$ and $\Phi_b$ play an essential role in the proof.

When  $\Phi_1 =\Phi_2$ by \eqref{eq.dil} we have
$$\|f_\lambda \|_{W(L^{\Phi_1}, L^{\Phi_1})}^{\circ} \asymp \|f_\lambda \|_{L^{\Phi_1}}^{\circ} \leq \frac{1}{\Phi_1^{-1}(\lambda^{d})} \|f\|_{L^{\Phi_1}}^{\circ}.$$

Next we prove \eqref{eq.5} and \eqref{eq.6} for $\Phi_1= \Phi_s$ (cf. \eqref{yf}) and for any Young function $\Phi_2$ with $\Phi_2 \leq \Phi_s$.
Since $\Phi^{-1}(\lambda^{d})=1$ for all $\lambda >0$,
by \eqref{eq.14} we have
\begin{align*}
  \|f_\lambda \|_{W(L^{\Phi_s}, L^{\Phi_2})}^{\circ} &\lesssim  \frac{1}{\Phi_s ^{-1} (\lambda^{d}) \Phi_2^{-1} (\lambda^{d})} \|f\|_{W(L^{\Phi_s }, L^{\Phi_2})}^{\circ} \\
  &= \frac{1}{\Phi_2^{-1} (\lambda^{d})} \|f\|_{W(L^{\Phi_s}, L^{\Phi_2})}^{\circ} =\frac{1}{\max\{\Phi_s ^{-1} (\lambda^{d}), \Phi_2^{-1}(\lambda^{d})\}} \|f\|_{W(L^{\Phi_s}, L^{\Phi_2})}^{\circ},
    \end{align*}
which is \eqref{eq.5}.

On the other hand, \eqref{eq.6} follows by complex interpolation from \eqref{eq.15} with $(\Phi_1, \Phi_2)=(\Phi_s, \Phi_b)$, that is
\begin{align*}
  \|f_\lambda \|_{W(L^{\Phi_s}, L^{\Phi_b})}^{\circ} & \lesssim  \frac{\lambda^{d}}{\Phi_s ^{-1} (\lambda^{d}) \Phi_b^{-1} (\lambda^{d})} \|f\|_{W(L^{\Phi_s}, L^{\Phi_b})}^{\circ}
  = \frac{\lambda^{d}}{\lambda^{d} +1} \|f\|_{W(L^{\Phi_s}, L^{\Phi_b})}^{\circ} \\
  &\leq \|f\|_{W(L^{\Phi_s}, L^{\Phi_b})}^{\circ} \\
  &=\frac{1}{\min\{\Phi_s ^{-1} (\lambda^{d}), \Phi_b^{-1}(\lambda^{d})\}} \|f\|_{W(L^{\Phi_s}, L^{\Phi_b})}^{\circ},
\end{align*}
and since $\Phi_s$ satisfies \eqref{positive.derivative}, trivial estimate
$$\|f_\lambda \|_{W(L^{\Phi_s}, L^{\Phi_s})}^{\circ} \asymp \|f_\lambda \|_{L^{\Phi_s}}^{\circ} =\|f \|_{L^{\Phi_s}}^{\circ} \asymp \|f\|_{W(L^{\Phi_s}, L^{\Phi_s})}^{\circ}.$$
By complex interpolation between the spaces $W(L^{\Phi_s}, L^{\Phi_b})$ and $W(L^{\Phi_s}, L^{\Phi_s})$, we have
\begin{equation}\label{eq.A}
\Phi_2 ^{-1}(u)= \Phi_b^{-1}(u)\rho(\frac{\Phi_s ^{-1}(u)}{\Phi_b^{-1}(u)}) =(u+1)\rho(\frac{1}{u+1}) \leq (u+1)\rho(1),
\end{equation}
which means $\Phi_2 ^{-1}(u) \lesssim \Phi_b^{-1}(u)$, so
\begin{equation}\label{eq.*}
    \Phi_b (u) \lesssim \Phi_2 (u).
\end{equation}
Using the concavity of $\rho$ in \eqref{eq.A}, we obtain
$$\rho(1)=\rho\big((u+1) \frac{1}{u+1}\big) \leq (u+1)\rho(\frac{1}{u+1}) = \Phi_2 ^{-1} (u)$$
which gives $\Phi_s ^{-1} (u) \lesssim \Phi_2 ^{-1} (u)$, so
\begin{equation}\label{eq.**}
    \Phi_2 (u)\lesssim \Phi_s (u).
\end{equation}
Combining \eqref{eq.*} and \eqref{eq.**}, we have
$$\Phi_b (u) \lesssim \Phi_2 (u) \lesssim \Phi_s (u).$$

Thus, the estimates \eqref{eq.5} and \eqref{eq.6} hold for $\Phi_1 = \Psi_s$ and  $\Phi_2 \leq \Phi_s$.
By using the same complex interpolation arguments we conclude that \eqref{eq.5} and \eqref{eq.6} also hold when  $\Phi_2 \leq \Phi_1$, for any Young functions satisfying the conditions of Proposition \ref{main.result}.

Next assume that $\Phi_1 < \Phi_2$. Then $\Psi_2 < \Psi_1$ by \cite[Theorem 2, p.16]{R-R}, so that $\Psi_1^{-1} \lesssim \Psi_2^{-1}$. We will prove \eqref{eq.6}, i.e. the case $\lambda \geq 1$, and note that \eqref{eq.5} can be proved in a similar way.

In the following calculation $ <\cdot, \cdot >$ denotes duality between the corresponding spaces.
The relation \eqref{eq.5} applied to the pair $(\Psi_1, \Psi_2)$, yields
    \begin{align}\label{eq.7}
  \|f_\lambda \|_{W(L^{\Phi_1}, L^{\Phi_2})}^{\circ}&= \sup_{\|g \|_{W(L^{\Psi_1}, L^{\Psi_2})}^{\circ} =1} | < f_{\lambda}, g> |  \nonumber\\
  &= \lambda^{-d} \sup_{\|g \|_{W(L^{\Psi_1}, L^{\Psi_2})}^{\circ} =1} | < f, g_{_{\scriptstyle 1/{\lambda}}}> |  \nonumber\\
  & \leq 4 \lambda^{-d} \|f \|_{W(L^{\Phi_1}, L^{\Phi_2})}^{\circ} \|g_{_{\scriptstyle 1/{\lambda}}} \|_{W(L^{\Psi_1}, L^{\Psi_2})}^{\circ} \nonumber\\
  &\lesssim \lambda^{-d} \frac{1}{ \max\{\Psi_1^{-1} (1/\lambda^{d}),  \Psi_2^{-1} (1/\lambda^{d})\}}  \|f \|_{W(L^{\Phi_1}, L^{\Phi_2})}^{\circ} \|g \|_{W(L^{\Psi_1}, L^{\Psi_2})}^{\circ} \nonumber\\
 &\lesssim  \lambda^{-1} \frac{1}{\Psi_2^{-1} (1/\lambda^{d})}  \|f \|_{W(L^{\Phi_1}, L^{\Phi_2})}^{\circ}
   \end{align}
where it is observed that
$$\|g_{_{\scriptstyle 1/{\lambda}}}\|_{W(L^{\Psi_1}, L^{\Psi_2})}^{\circ} \leq \frac{1}{ \max\{\Psi_1^{-1} (1/\lambda),  \Psi_2^{-1} (1/\lambda)\}} \|g\|_{W(L^{\Psi_1}, L^{\Psi_2})}^{\circ},$$
since $1/\lambda \leq 1$.

By the concavity of $\Psi_2^{-1}$, we have $\frac{1}{\Psi_2^{-1}(1/\lambda^{d})} \leq \Psi_2^{-1}(\lambda^{d})$ for $\lambda \geq 1$. From \eqref{eq.7}, we obtain
\begin{equation}\label{eq.8}
 \|f_\lambda \|_{W(L^{\Phi_1}, L^{\Phi_2})}^{\circ} \lesssim \lambda^{-d} \Psi^{-1}(\lambda^{d})  \|f\|_{W(L^{\Phi_1}, L^{\Phi_2})}^{\circ}.
\end{equation}

By \cite[Proposition 1, p.13]{R-R} it follows that $u \leq \Phi^{-1}(u) \Psi^{-1}(u) < 2 u$ for any $u >0$ and any complementary Young pair $(\Phi, \Psi)$. Hence we have
\begin{align*}
 \|f_\lambda \|_{W(L^{\Phi_1}, L^{\Phi_2})}^{\circ} &\lesssim 2\lambda^{-d} \lambda \frac{1}{\Phi_2^{-1}(\lambda^{d})}  \|f\|_{W(L^{\Phi_1}, L^{\Phi_2})}^{\circ}  \\  &\lesssim \frac{1}{\Phi_2^{-1}(\lambda^{d})}  \|f\|_{W(L^{\Phi_1}, L^{\Phi_2})}^{\circ} \\
 &= \frac{1}{\min\{\Phi_1^{-1}(\lambda^{d}), \Phi_2^{-1}(\lambda^{d})\}} \|f\|_{W(L^{\Phi_1}, L^{\Phi_2})}^{\circ}
\end{align*}
which is \eqref{eq.6}. As we noted, the case $0< \lambda \leq 1$ can be done in a similar way, so we omit the proof.

Finally, it remains to prove  \eqref{eq.5} and \eqref{eq.6} when $\Phi_1 =\Phi_b$ and
$\Phi_b < \Phi_2$. We again use the complex interpolation between
$(\Phi_1, \Phi_2)=(\Phi_b , \Phi_b)$ and $(\Phi_1, \Phi_2)=(\Phi_b , \Phi)$ as follows.

By \eqref{eq.14} with $(\Phi_1, \Phi_2)=(\Phi_b , \Phi_b)$, we have
\begin{equation*}
  \|f_\lambda \|_{W(L^{\Phi_b}, L^{\Phi_b})}^{\circ} \asymp  \|f_\lambda \|_{L^{\Phi_b}}^{\circ} \leq \frac{1}{\Phi_b^{-1} (\lambda^{d})} \|f\|_{L^{\Phi_2}}^{\circ} \asymp  \frac{1}{\Phi_b^{-1} (\lambda^{d})} \|f\|_{W(L^{\Phi_b}, L^{\Phi_b})}^{\circ}
    \end{equation*}
which is \eqref{eq.5}.

On the other hand, by \eqref{eq.15} with $(\Phi_1, \Phi_2)=(\Phi_b , \Phi)$, we have
\begin{align*}
  \|f_\lambda \|_{W(L^{\Phi_b}, L^{\Phi})}^{\circ} &\leq \frac{\lambda^{d}}{\Phi_b^{-1} (\lambda^{d}) \Phi^{-1} (\lambda^{d})} \|f\|_{W(L^{\Phi_b}, L^{\Phi})}^{\circ} \\
  &= \frac{\lambda^{d}}{\lambda^{d} +1} \|f\|_{W(L^{\Phi_b}, L^{\Phi})}^{\circ} \\
  &\leq \|f\|_{W(L^{\Phi_b}, L^{\Phi})}^{\circ} \\
  &= \frac{1}{\min\{\Phi_b^{-1} (\lambda^{d}), \Phi^{-1} (\lambda^{d})\}} \|f\|_{W(L^{\Phi_b}, L^{\Phi})}^{\circ},
    \end{align*}
which is  \eqref{eq.6}, and the proof is completed.
\end{proof}

Note that when we take $\Phi_1(x)=x^p$ and $\Phi_2(x)=x^q$ for $1 \leq p,q\leq \infty$ in Proposition \ref{main.result}, we have
$$ \|f_\lambda \|_{W(L^p, L^q)}\lesssim \frac{1}{\max\{\lambda^{d/p}, \lambda^{d/q}\}} \|f\|_{W(L^p, L^q)}$$
for $0< \lambda <1$. In this case, we obtain a sharper estimate than \cite[Proposition 2.2]{Cor-Nic} and \cite[Proposition 2.5.1]{Cor-Rod}.

For the sharpness of Proposition \ref{main.result}
we recall the result of Cordero and Nicola related to the  Wiener amalgam spaces of Lebesgue type.
\begin{proposition}(\cite[Proposition 2.2]{Cor-Nic} and \cite[Proposition 2.5.1]{Cor-Rod}) For $1 \leq p,q\leq \infty$,
$$
    \|f_\lambda \|_{W(L^p, L^q)} \lesssim \lambda^{-d\max\{1/p, 1/q\}} \|f\|_{W(L^p, L^q)} \quad , \quad \forall ~~0<\lambda \leq 1$$
    and
    $$\|f_\lambda \|_{W(L^p, L^q)} \lesssim \lambda^{-d\min\{1/p, 1/q\}} \|f\|_{W(L^p, L^q)}\quad , \quad \forall ~~\lambda \geq 1.$$

\end{proposition}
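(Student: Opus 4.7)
The plan is to deduce these Lebesgue estimates from Lemma \ref{lem.2.5.2}, specialized to $\Phi_1(x)=x^p$ and $\Phi_2(x)=x^q$, combined with complex interpolation in the spirit of Lemma \ref{amal.inter}. With those Young functions one has $\Phi_i^{-1}(\lambda^d)=\lambda^{d/p_i}$, and the Lemma delivers the bounds $\lambda^{-d(1/p+1/q)}$ for $0<\lambda\leq 1$ and $\lambda^{d(1-1/p-1/q)}$ for $\lambda\geq 1$; both are strictly weaker than the targets, so the real content is to sharpen them at two well-chosen endpoints of the $(p,q)$ parameter square and interpolate across the remaining variable.

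For the first inequality, fix $0<\lambda\leq 1$ and by symmetry assume $p\leq q$, so that the target is $\lambda^{-d\max\{1/p,1/q\}}=\lambda^{-d/p}$. Two endpoints yield precisely this constant. At $(p,q)=(p,p)$, the standard amalgam identification $W(L^p,L^p)\asymp L^p$ combined with the classical dilation identity gives $\|D_\lambda\|_{L^p\to L^p}=\lambda^{-d/p}$. At $(p,q)=(p,\infty)$, Lemma \ref{lem.2.5.2} with $\Phi_2^{-1}(\lambda^d)=1$ (cf.\ \eqref{yf}) yields $\|D_\lambda\|_{W(L^p,L^\infty)\to W(L^p,L^\infty)}\lesssim \lambda^{-d/p}$. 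Complex interpolation on the global component then extends the bound $\lambda^{-d/p}$ to every intermediate $q\in[p,\infty]$. The sub-case $p\geq q$ is symmetric: interpolate in the local component between $(p,q)=(q,q)$ and $(p,q)=(\infty,q)$, where Lemma \ref{lem.2.5.2} with $\Phi_1^{-1}(\lambda^d)=1$ supplies the required endpoint bound.

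The second inequality is proved analogously from the $\lambda\geq 1$ half of Lemma \ref{lem.2.5.2}. With $p\leq q$ and target $\lambda^{-d/q}$, the endpoints are $(q,q)$, contributing $\|D_\lambda\|_{L^q\to L^q}=\lambda^{-d/q}$ through the diagonal identification, and $(1,q)$, where Lemma \ref{lem.2.5.2} computes $\lambda^d\cdot\lambda^{-d}\cdot\lambda^{-d/q}=\lambda^{-d/q}$; interpolation in the local component over $p\in[1,q]$ closes this sub-case, and $p\geq q$ is handled by symmetry.

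The main technical point I expect to be delicate is that at the $L^\infty$ endpoint the associated Young function fails the $\Delta_2$ condition, so Lemma \ref{amal.inter} is not directly applicable. I would circumvent this by invoking the equivalent discrete-type norm from Proposition \ref{obsr.}, which realizes $\|f\|_{W(L^p,L^q)}$ as the $\ell^q$-norm of the scalar sequence $(\|f\chi_{Q+k}\|_{L^p})_{k\in\Z^d}$, and then applying the classical Riesz--Thorin theorem on $L^p(Q)$ and on $\ell^q$ componentwise; this delivers interpolation of the operator norm of $D_\lambda$ between the relevant amalgams without appealing to $\Delta_2$, and the Cordero--Nicola bounds follow at once.
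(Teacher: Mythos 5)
The paper does not actually prove this proposition: it is quoted verbatim from Cordero--Nicola for comparison with Proposition \ref{main.result}, so the only meaningful comparison is with the paper's proof of the Orlicz analogue (and with Cordero--Nicola's original argument). Your overall scheme --- the crude bounds of Lemma \ref{lem.2.5.2} at well-chosen endpoint pairs $(p,q)$, the diagonal identification $W(L^p,L^p)\asymp L^p$, and complex interpolation in one component at a time --- is exactly the scheme used there, and your endpoint computations ($\Phi^{-1}(\lambda^d)=1$ at the $L^\infty$ endpoint, $\lambda^d\cdot\lambda^{-d}\cdot\lambda^{-d/q}$ at the $L^1$ endpoint) are correct. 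The genuine difference is how the two arguments reach the second half of the parameter range: the paper's Proposition \ref{main.result} (following Cordero--Nicola) uses duality, $W(L^{\Phi_1},L^{\Phi_2})^{*}=W(L^{\Psi_1},L^{\Psi_2})$ together with $\langle f_\lambda,g\rangle=\lambda^{-d}\langle f,g_{1/\lambda}\rangle$, precisely so as to avoid interpolating against an $L^\infty$ endpoint; you instead interpolate against $(p,\infty)$ and $(\infty,q)$ directly and must then confront the failure of $\Delta_2$ (equivalently, of absolute continuity of the norm) at that endpoint.

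Your proposed repair of that last step is the one place with a real gap. The dilation $D_\lambda$ does \emph{not} act componentwise on the discretization $\bigl(\|f\chi_{Q+k}\|_{L^p}\bigr)_{k\in\Z^d}$ of Proposition \ref{obsr.} --- it permutes and merges the cubes $Q+k$ --- so ``Riesz--Thorin on $L^p(Q)$ and on $\ell^q$ componentwise'' is not a meaningful application of Riesz--Thorin to a linear operator. To make this rigorous you would need the retract/coretraction formalism identifying $W(L^p,L^q)$ with a complemented subspace of $\ell^q(\Z^d;L^p(Q))$ and then vector-valued complex interpolation of $\ell^{q_0}(A)$ and $\ell^{q_1}(A)$, which is itself delicate when $q_1=\infty$ (one obtains a closure of finitely supported sequences rather than all of $\ell^\infty(A)$, and must then argue by density using the a priori boundedness of $D_\lambda$ from Lemma \ref{lem.2.5.2}). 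This is fixable, but it is precisely the complication that the duality route is designed to bypass; I would recommend replacing your $(p,\infty)$ and $(\infty,q)$ interpolations by the duality argument of \eqref{eq.7}, keeping the $L^\infty$ endpoints only where Lemma \ref{lem.2.5.2} already gives the sharp constant outright.
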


\section{An application of Wiener amalgam spaces of Orlicz type} \label{sec5}

In this section we recall an application of the amalgam space $W(C_0 (\Rd), L^1 (\Rd))$ to  the "Amalgam Balian-Low Theorem" due to C. Heil,
\cite{Heil2}. As we  mentioned in the introduction, Proposition \ref{B.L.1} is an extension of  \cite[Proposition 11.9.5]{Heil}.
The Wiener amalgam space $W(C_0(\Rd), L^1(\Rd))$ plays a central role in the Amalgam Balian-Low Theorem. This space consists of functions which are locally in $C_0$ and globally in $L^1 (\Rd)$, and is called the Wiener algebra. It can be shown that
$$
W(C_0(\Rd), L^1(\Rd))= C_0(\Rd) \cap W(L^{\infty}(\Rd), L^1(\Rd)).
$$

We first recall some notation from Gabor analysis, and
refer to e.g. \cite{Groc, GrHeOk, Heil1, Heil, Heil2} for proofs and details.

Given $g \in L^2(\Rd)$ and $a,b >0$, we consider the time-frequency shifts of $g$ as
\begin{equation}\label{Gabor}
    g_{mn}(x)= e^{2 \pi mbx} g(x-na) = M_{mb} T_{na} g(x), \quad x \in \Rd, \quad m,n \in \mathbb{Z}^d.
\end{equation}
Then the set of time-frequency shifts
$$ \mathcal{G} (g,a,b) =
\{g_{mn}\}_{m,n \in \mathbb{Z}^d} $$
is called a {\em Gabor system}. Recall (cf. \cite{Groc}), a Gabor system is a {\em  Gabor frame} for $L^2(\Rd)$ if
there exist positive constants $A,B>0$ such that
$$
A \| f \| \leq \sum_{m,n}
| \langle f, g_{mn} \rangle |^2
\leq B \| f\|, \qquad \forall f \in L^2(\Rd).
$$
Here $ \langle \cdot, \cdot \rangle $ is the scalar product, and $ \| \cdot \|$ is the norm
in $L^2(\Rd)$.

Next we consider orthonormal bases and Riesz bases in the Hilbert space  $L^2(\Rd)$. A Riesz basis is the image of an orthonormal basis under an invertible mapping, and thus any  orthonormal basis is a Riesz basis. It can be shown that a Gabor system $\{g_{mn}\}_{m,n \in \mathbb{Z}^d}$ can only be a frame for $L^2(\Rd)$ when $ab \leq 1$, and can only be a Riesz basis for $L^2(\Rd)$ when $ab=1$. We are here mostly interested in Gabor systems $\{g_{mn}\}_{m,n \in \mathbb{Z}^d}$ of the form in \eqref{Gabor} which are Riesz bases for $L^2(\Rd)$. Then $ab=1$, and by, a change of variables, it is enough to consider the case $a=b=1$. Therefore, we set $a=b=1$ for the remainder of this section.

To prove the Amalgam Balian-Low Theorem, define the Zak transform of a function $g \in L^2(\Rd)$ to be the function $Zg$ with domain $\mathbb{R}^2$ defined by
$$Zg(t,w)= \sum_{k \in \mathbb{Z}^d} g(t+k)e^{2\pi ikw}, \qquad (t,w) \in \mathbb{R}^{2d}.$$

The following lemmas summarize some of the basic facts about Zak transform, see e.g. \cite{Heil2}.

\begin{lemma}\label{lem.11.9.3}
Let $Q$ be any closed unit square in $\mathbb{R}^{2d}$ and $g \in L^2(\Rd)$.
\begin{enumerate}
    \item[i)] The series defining $Zg$ converges in the norm of $L^2(Q)$.

    \item[ii)] $Z$ is an unitary mapping of $L^2(\Rd)$ onto $L^2(Q)$.

    \item[iii)] $Zg$ satisfies the quasi periodicity relations, for $(t,w) \in \mathbb{R}^{2d}$,
    $$Zg (t+1, w)= e^{-2\pi iw}Zg (t,w), \quad Zg (t,w+1)= e^{-2\pi iw}Zg (t,w).$$

    \item[iv)] If $Zg$ is continuous on $\mathbb{R}^{2d}$, then $Zg$ has a zero in $Q$.

    \item[v)] With $a=b=1$, we have
    $$Z(g_{mn})(x,w)= e^{2\pi imx}e^{2\pi inw} Zg(t,w), \quad m,n \in \mathbb{Z}^{d}.$$
\end{enumerate}
\end{lemma}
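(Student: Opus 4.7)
The plan is to establish the five items by essentially separate short arguments, all turning on two elementary facts: $\{e^{2\pi i k \cdot w}\}_{k \in \mathbb{Z}^d}$ is an orthonormal basis of $L^2$ on any unit cube in $w$, and the Zak transform intertwines integer translations with multiplication by exponentials.

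For (i) and (ii), I would introduce the truncated sums $Z_N g(t,w) = \sum_{|k| \leq N} g(t+k) e^{2\pi i k \cdot w}$ and apply Parseval in the $w$ variable on a unit cube, then unfold the sum over $k$ into an integral over $\Rd$ in $t$. This produces simultaneously the Cauchy property of $(Z_N g)$ in $L^2(Q)$, the isometric identity $\|Zg\|_{L^2(Q)} = \|g\|_{L^2(\Rd)}$, and therefore well-definedness and the isometry statement. Surjectivity onto $L^2(Q)$ is then obtained by the converse direction: expand an arbitrary $F \in L^2(Q)$ as a Fourier series in $w$, read off the coefficients $c_k(t) \in L^2$ on the unit cube in $t$, and reassemble $g$ by $g(t+k) = c_k(t)$, giving $Zg = F$.

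Parts (iii) and (v) are direct algebraic manipulations. For (iii), substituting $t+1$ and reindexing $k \mapsto k-1$ pulls out the factor $e^{-2\pi i w}$; substituting $w+1$ leaves each term unchanged because $e^{2\pi i k} = 1$ for $k \in \mathbb{Z}^d$, so the second identity in the statement is in fact the periodicity $Zg(t,w+1) = Zg(t,w)$ (the displayed formula appears to carry a typo). Part (v) follows by substituting $g_{mn}(x) = e^{2\pi i m x} g(x-n)$ into the series for $Z$, using $e^{2\pi i m k} = 1$, and reindexing $k \mapsto k+n$ to recover the factor $e^{2\pi i n w}$.

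The main obstacle is (iv), for which I would argue by contradiction using a winding-number argument. Suppose $Zg$ is continuous and never zero on $\mathbb{R}^{2d}$; first reduce to the case $d = 1$ by restricting to a two-dimensional slice on which the quasi-periodicity from (iii) still holds. Then $Zg/|Zg|$ defines a continuous map from the contractible square $Q = [0,1]^2$ into $S^1$, so its restriction to $\partial Q$ must have winding number zero. On the other hand, traversing $\partial Q$ counterclockwise and using (iii) on the right edge shows that the phase of $Zg$ picks up an extra $-2\pi$ coming from the factor $e^{-2\pi i w}$, while the contributions of the two horizontal edges cancel and the left edge cancels the $w$-variation of the right edge. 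The net winding is therefore $-1$, contradicting nullhomotopy. The main technical point is to make the one-dimensional reduction and the winding count rigorous; everything else is routine.
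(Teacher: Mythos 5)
The paper does not prove this lemma at all: it is stated as a summary of known facts with a pointer to Heil's book, so there is no in-paper argument to compare against. Your proposal is a correct and complete reconstruction of the standard proofs found in those references. Parts (i)--(ii) via Parseval in $w$ on a unit cube followed by unfolding the $k$-sum into an integral over $\Rd$, and the converse Fourier-series construction for surjectivity, are exactly the textbook route; (iii) and (v) are the routine reindexings you describe, and you are right that the second displayed relation in the statement is a typo --- with this definition of $Z$ one has $Zg(t,w+1)=Zg(t,w)$, the factor $e^{-2\pi i w}$ belonging only to the shift in $t$. For (iv), the winding-number computation is the classical argument: nonvanishing continuity forces degree $0$ on $\partial Q$, while quasi-periodicity forces the horizontal edges to cancel, the $Zg(0,\cdot)$ contributions of the vertical edges to cancel, and the residual factor $e^{-2\pi i w}$ to contribute winding $-1$. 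The only point deserving explicit care is the reduction to $d=1$: you should note that fixing all coordinates except one pair $(t_1,w_1)$ yields a continuous function on $\mathbb{R}^2$ that still satisfies the one-dimensional quasi-periodicity (the relations hold coordinatewise, with $e^{-2\pi i w_1}$ appearing for a shift by $e_1$ in $t$), so a zero on the slice is a zero of $Zg$; the paper's own statement is written in $d=1$ notation and glosses over this. With that sentence added, your proof is rigorous and self-contained, which is more than the paper provides.
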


\begin{lemma}\label{lem.11.9.4}
Let $g \in L^2(\Rd)$ be fixed, and set $a=b=1$.
\begin{enumerate}
    \item[i)] $\{g_{mn}\}_{m,n \in \mathbb{Z}^d}$ is an orthonormal basis for $L^2(\Rd)$ if and only if $|Zg(x,w)|=1$ a.e.

    \item[ii)] $\{g_{mn}\}_{m,n \in \mathbb{Z}^d}$ is a Riesz basis for $L^2(\Rd)$ if and only if there exist $A$, $B$ such that $0<A \leq |Zg(x,w)| \leq B < \infty$ a.e.
\end{enumerate}
\end{lemma}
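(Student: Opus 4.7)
The plan is to transfer both characterizations to $L^2(Q)$ via the Zak transform. By Lemma \ref{lem.11.9.3}(ii), $Z : L^2(\Rd) \to L^2(Q)$ is unitary, so $\{g_{mn}\}_{m,n\in\Z^d}$ is an orthonormal basis (respectively, a Riesz basis) of $L^2(\Rd)$ if and only if $\{Z g_{mn}\}_{m,n\in\Z^d}$ is an orthonormal basis (respectively, a Riesz basis) of $L^2(Q)$. By Lemma \ref{lem.11.9.3}(v), we have the factorization
\begin{equation*}
Z g_{mn}(t,w) \;=\; e_{mn}(t,w)\, Zg(t,w), \qquad e_{mn}(t,w):=e^{2\pi i(mt+nw)},
\end{equation*}
and $\{e_{mn}\}_{m,n\in\Z^d}$ is the standard Fourier orthonormal basis of $L^2(Q)$. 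Hence, letting $M_{Zg}$ denote the (a priori formal) multiplication operator $f\mapsto Zg\cdot f$ on $L^2(Q)$, we have $Z g_{mn}=M_{Zg}\,e_{mn}$, so the question reduces to determining when $M_{Zg}$ carries the orthonormal basis $\{e_{mn}\}$ to an orthonormal basis, respectively a Riesz basis, of $L^2(Q)$.

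For part (i), I would use the elementary fact that a bounded operator on a Hilbert space maps some orthonormal basis to an orthonormal basis if and only if it is unitary. Thus $\{Z g_{mn}\}$ is an orthonormal basis of $L^2(Q)$ if and only if $M_{Zg}$ is unitary, which is equivalent to $|Zg(t,w)|=1$ almost everywhere on $Q$. For the forward implication one also needs a priori that $Zg\in L^\infty(Q)$ so that $M_{Zg}$ is bounded; this follows by testing orthonormality against $e_{00}$ and applying Bessel's inequality together with the quasi periodicity of $Zg$, which pins $|Zg|^2$ down pointwise a.e.

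For part (ii), I would invoke the standard characterization that an operator image of an orthonormal basis is a Riesz basis exactly when the operator is a linear isomorphism. Hence $\{Z g_{mn}\}$ is a Riesz basis of $L^2(Q)$ if and only if $M_{Zg}$ is bounded and boundedly invertible on $L^2(Q)$. A multiplication operator $M_\phi$ on $L^2(Q)$ is bounded with bounded inverse iff both $\phi$ and $1/\phi$ lie in $L^\infty(Q)$, which is exactly the condition $0<A\le |Zg(t,w)|\le B<\infty$ a.e., with $A$ and $B$ coinciding (up to squares) with the Riesz basis bounds.

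The only real subtlety, and the main thing to be careful about, is the equivalence \emph{Riesz basis $\Leftrightarrow$ invertible image of an ONB}: specifically, that the existence of Riesz bounds on the system $\{Z g_{mn}\}$ already forces $M_{Zg}$ to be everywhere defined on $L^2(Q)$ (so that $Zg\in L^\infty$) and to have closed range equal to $L^2(Q)$ (so that $1/Zg\in L^\infty$). This follows from the usual argument that the synthesis operator associated with a Riesz basis is a topological isomorphism from $\ell^2$ onto the span, combined with density of trigonometric polynomials in $L^2(Q)$; the corresponding statements in Lemma \ref{lem.11.9.4} are then immediate, using that $a=b=1$ and the unitarity of $Z$ throughout.
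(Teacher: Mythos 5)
The paper does not prove this lemma at all: it is recalled as a known fact about the Zak transform with a citation to Heil, and your argument is exactly the standard proof from that source (conjugate by the unitary $Z$, write $Zg_{mn}=e_{mn}\,Zg$, and characterize when the multiplication operator $M_{Zg}$ is unitary, respectively a topological isomorphism, on $L^2(Q)$). Your proof is correct; the one point worth tightening is in part (i), where the cleanest way to get $|Zg|=1$ a.e.\ is to observe that orthonormality of $\{e_{mn}Zg\}$ says precisely that the Fourier coefficients of the $L^1(Q)$ function $|Zg|^2$ equal $\delta_{(m,n),(0,0)}$, rather than appealing to Bessel's inequality.
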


Now, by adopting the technique from \cite{Heil}, we give the following proposition for Orlicz amalgam spaces.

\begin{proposition}\label{B.L.1}
    Let $\Phi$ be a Young function and $Q$ be a unit cube in $\mathbb{R}^{2d}$. Then the Zak transform is a continuous, linear, injective map of $W(L^\Phi(\Rd), L^1 (\Rd))$ into $L^\Phi(Q)$.
\end{proposition}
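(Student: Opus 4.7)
The plan is to verify linearity, continuity, and injectivity separately. Linearity is immediate once the series defining $Zg$ is known to converge, which follows from the continuity estimate below. The quasi-periodicity relations in Lemma \ref{lem.11.9.3}(iii) imply that $|Zg|$ is $1$-periodic in each coordinate of $\R^{2d}$, so the $L^{\Phi}(Q)$ norm of $Zg$ does not depend on the choice of unit cube $Q$; I therefore fix $Q = [0,1]^{2d}$.

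For continuity, observe that for any $(t,w) \in Q$,
$$|Zg(t,w)| \leq G(t) := \sum_{k \in \Z^d} |g(t+k)|.$$
Since $G$ is independent of the $w$-coordinate and the $w$-cube has unit measure, a direct computation gives $\|G\|_{L^{\Phi}(Q)}^{\circ} = \|G\|_{L^{\Phi}([0,1]^d)}^{\circ}$. Completeness of $L^{\Phi}$ combined with monotone convergence then justifies the countable triangle inequality
$$\|G\|_{L^{\Phi}([0,1]^d)}^{\circ} \leq \sum_{k \in \Z^d} \|g(\cdot + k)\chi_{_{\scriptstyle [0,1]^d}}\|_{L^{\Phi}}^{\circ} = \sum_{k \in \Z^d} \|g \chi_{_{\scriptstyle [0,1]^d+k}}\|_{L^{\Phi}}^{\circ},$$
where the last equality uses translation invariance of Lebesgue measure. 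The right-hand side is precisely the discrete-type norm from Proposition \ref{obsr.} with $\Phi_2(x)=x$ (so that $\ell^{\Phi_2} = \ell^1$), hence equivalent to $\|g\|_{W(L^{\Phi}, L^1)}$. Solidity of $L^{\Phi}(Q)$ together with the norm equivalences \eqref{eq.norm} and \eqref{normeq} then yields $\|Zg\|_{L^{\Phi}(Q)} \lesssim \|g\|_{W(L^{\Phi}, L^1)}$.

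For injectivity, the continuity estimate shows $G \in L^{\Phi}([0,1]^d)$, whence $G(t) < \infty$ a.e., so for a.e.\ $t$ the series $\sum_k g(t+k) e^{2\pi i k w}$ converges absolutely in $w$. If $Zg = 0$ in $L^{\Phi}(Q)$, then $Zg = 0$ a.e.\ on $Q$; by Fubini, for a.e.\ such $t$ the absolutely convergent Fourier series $w \mapsto \sum_k g(t+k) e^{2\pi i k w}$ vanishes a.e.\ on the unit cube in $w$, and uniqueness of Fourier coefficients on $\T^d$ forces $g(t+k) = 0$ for every $k \in \Z^d$. Since the cubes $\{[0,1]^d + k : k \in \Z^d\}$ tile $\R^d$, this gives $g = 0$ a.e.

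The main technical obstacle is the justification of the countable triangle inequality for the Luxemburg norm: one must verify that if $\sum_k \|f_k\|_{L^{\Phi}}^{\circ} < \infty$ for nonnegative $f_k$, then $\sum_k f_k$---interpreted both as a norm limit in the Banach space $L^{\Phi}([0,1]^d)$ and as a pointwise monotone limit---agrees a.e.\ and satisfies $\|\sum_k f_k\|_{L^{\Phi}}^{\circ} \leq \sum_k \|f_k\|_{L^{\Phi}}^{\circ}$. Beyond this routine bookkeeping, the argument parallels Heil's classical treatment of the Wiener algebra case and relies only on solidity of $L^{\Phi}$, the discrete characterization of the amalgam norm from Proposition \ref{obsr.}, and Fourier uniqueness on the torus.
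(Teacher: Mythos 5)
Your proof is correct. The continuity estimate is essentially the paper's own argument: both proofs apply the termwise triangle inequality over $k\in\Z^d$ to the series defining $Zf$, observe that the $L^\Phi(Q)$-norm of the $k$-th term reduces to $\|f\chi_{_{\scriptstyle [0,1]^d+k}}\|_{L^{\Phi}}$ (the modulation factor has modulus one and the integrand is constant in $w$ over a unit cube), and identify the resulting $\ell^1$-sum of local norms with the discrete $W(L^{\Phi},L^1)$ norm of Proposition \ref{obsr.}; your detour through the dominating function $G$ and solidity is a cosmetic variant, though your explicit attention to the countable triangle inequality for the Luxemburg norm (identifying the norm limit with the pointwise monotone limit a.e.) fills in a step the paper passes over silently. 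Where you genuinely diverge is injectivity: the paper notes that $L^{\Phi}(K)\subseteq L^1(K)$ for compact $K$, hence $W(L^{\Phi}(\Rd),L^1(\Rd))\subseteq W(L^1(\Rd),L^1(\Rd))=L^1(\Rd)$, and then cites injectivity of $Z$ from Heil's thesis, whereas you give a self-contained argument: $G\in L^{\Phi}([0,1]^d)$ forces $G(t)<\infty$ a.e., so the series in $w$ is an absolutely convergent Fourier series for a.e.\ $t$, and Fubini plus uniqueness of Fourier coefficients on $\T^d$ yields $f=0$ a.e. The paper's route is shorter and makes the mechanism transparent (injectivity is inherited from the largest local component $L^1$), while yours avoids the external citation by in effect reproving that $L^1$-injectivity; both are valid.
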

\begin{proof}
    Fix $f \in W(L^\Phi(\Rd), L^1 (\Rd))$. For $k \in \mathbb{Z}^d$, define $F_k (t, w)= f(t+k)e^{2\pi ikw}$. Since $\|F_k \|_{L^\Phi } =\|f \chi_{_{\scriptstyle Q+k}} \|_{L^{\Phi}}$, we have $F_k \in L^\Phi(Q)$. Then,
    $$\|Zf \|_{L^\Phi (Q)} = \Big\|\sum_{k \in \mathbb{Z}^d} F_k\Big\|_{L^\Phi} \leq \sum_{k \in \mathbb{Z}^d} \|F_k \|_{L^\Phi} =\sum_{k \in \mathbb{Z}^d} \|f \chi_{_{\scriptstyle Q+k}} \|_{L^{\Phi}} =\|f\|_{W(L^\Phi, L^1)} <\infty.$$
 Hence the series $Zf=\sum_{k \in \mathbb{Z}^d}F_k$ converges absolutely in $L^\Phi(Q)$ and $Z$ is a continuous mapping of $W(L^\Phi(\Rd), L^1(\Rd))$ into $L^\Phi(Q)$.

Now, we show that it is an injective mapping. For the compact subset $k+Q$, we have $L^\Phi(k+Q) \subseteq L^1(k+Q)$. Since $L^1 (\Rd)$ is a solid space, we obtain
$$W(L^\Phi(\Rd), L^1(\Rd))\subseteq W(L^1(\Rd), L^1 (\Rd))=L^1(\Rd).$$
By \cite[Proposition 7.4.1]{Heil1}, $Z$ is injective.
\end{proof}

In Proposition \ref{B.L.1}, when we take $\Phi(x)=x^p$ for $1 \leq p <\infty$, we obtain the following result for $L^p$ spaces, see \cite[Proposition 7.5.1]{Heil1} or \cite[Proposition 11.9.5]{Heil}.

\begin{corollary}
    Let $1 \leq p <\infty$ be given and let $Q$ be a unit cube in $\mathbb{R}^{2d}$. Then the Zak transform is a continuous, linear, injective map of $W(L^p(\Rd), L^1 (\Rd))$ into $L^p(Q)$.
\end{corollary}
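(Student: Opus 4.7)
The plan is to derive this corollary directly from Proposition \ref{B.L.1} by specializing the Young function to $\Phi(x) = x^p$. First I would verify that $\Phi(x) = x^p$ is indeed a Young function for $1 \leq p < \infty$: it is convex (since $p \geq 1$), satisfies $\Phi(0) = 0$, and $\Phi(x) \to \infty$ as $x \to \infty$. This is exactly the standard identification recalled in Section \ref{sec2}, where it is observed that for $\Phi(x) = x^p$ one has $L^\Phi(\Rd) = L^p(\Rd)$ with equivalent norms, and consequently $W(L^\Phi(\Rd), L^1(\Rd)) = W(L^p(\Rd), L^1(\Rd))$ with equivalent norms, and likewise $L^\Phi(Q) = L^p(Q)$.

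Next I would invoke Proposition \ref{B.L.1} with this specific choice of $\Phi$. The proposition guarantees that the Zak transform is a continuous, linear, injective map from $W(L^\Phi(\Rd), L^1(\Rd))$ into $L^\Phi(Q)$. Under the identifications above, this is precisely the claim that $Z$ is a continuous, linear, injective map from $W(L^p(\Rd), L^1(\Rd))$ into $L^p(Q)$. Linearity of $Z$ is transparent from its defining series; continuity is the norm estimate
\[
\|Zf\|_{L^p(Q)} \leq \|f\|_{W(L^p, L^1)},
\]
which arises by reading off the estimate in the proof of Proposition \ref{B.L.1} with $\|\cdot\|_{L^\Phi}$ replaced by $\|\cdot\|_{L^p}$.

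There is essentially no obstacle here, since the corollary is a direct specialization; the only minor thing to mention is the need to check that the constraint $1 \leq p < \infty$ (rather than $p = \infty$) is what makes $L^p(Q) \subseteq L^1(Q)$ hold on the compact set $Q$, so that the injectivity step via $W(L^p, L^1) \subseteq L^1(\Rd)$ followed by \cite[Proposition 7.4.1]{Heil1} goes through just as in the proof of Proposition \ref{B.L.1}. Hence the corollary follows immediately, and no additional argument beyond the specialization is required.
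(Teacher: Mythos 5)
Your proof is correct and takes exactly the route the paper intends: the corollary is obtained by specializing Proposition \ref{B.L.1} to the Young function $\Phi(x)=x^p$ and using the standard identification $L^{\Phi}=L^p$ (with equivalent norms, so the continuity estimate holds up to a constant). The only slight misstatement is your remark that $1\leq p<\infty$ is what makes $L^p(Q)\subseteq L^1(Q)$ hold on the compact set $Q$ --- that inclusion is valid even for $p=\infty$; the real reason the corollary excludes $p=\infty$ is that $\Phi(x)=x^p$ yields $L^p$ only for finite $p$, the case $p=\infty$ requiring instead the Young function in \eqref{yf}.
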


Note that since $W(C_0, L^1)= C_0 \cap W(L^{\infty}(\Rd), L^1(\Rd))$ and $L^{\Phi}(\Rd)$ is $L^{\infty}(\Rd)$ in the case
\begin{equation*}
\Phi(x)=
\begin{cases}
0, & 0 \leq x \leq 1,\\
\infty,& x>1,
\end{cases}
\end{equation*}
we obtain the following well known result which is given in \cite{Heil1, Heil, Heil2}.

\begin{corollary}\label{cor.5.4}
    If $f \in W(C_0 (\Rd), L^1 (\Rd))$, then $Zf$ is continuous on $\mathbb{R}^{2d}$.
\end{corollary}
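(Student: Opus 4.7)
The plan is to exploit the identification $W(C_0(\Rd), L^1(\Rd)) = C_0(\Rd) \cap W(L^{\infty}(\Rd), L^1(\Rd))$ recalled just before the corollary. Thus $f$ is a continuous function whose control function with respect to $L^{\infty}$ is in $L^1(\Rd)$, and by the discrete characterization of the amalgam norm (Proposition \ref{obsr.}) we have
\[
\sum_{k \in \mathbb{Z}^d} \|f \chi_{_{\scriptstyle Q+k}}\|_{L^{\infty}} \asymp \|f\|_{W(L^\infty, L^1)} < \infty,
\]
where $Q = [0,1]^d$.

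Next I would write $Zf(t,w) = \sum_{k\in\mathbb{Z}^d} F_k(t,w)$ with $F_k(t,w) := f(t+k)\, e^{2\pi i k w}$. Since $f$ is continuous, each $F_k$ is continuous on $\mathbb{R}^{2d}$, so the whole matter reduces to establishing uniform convergence of the series on every compact set $K \subset \mathbb{R}^{2d}$ and invoking the Weierstrass $M$-test.

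To get the uniform majorants, let $K_1 \subset \mathbb{R}^d$ be the projection of $K$ onto the $t$-variable; it is compact, so it is covered by finitely many translates $Q+j_1,\ldots,Q+j_N$ of the unit cube. For $(t,w)\in K$ and $k\in\mathbb{Z}^d$ we then have
\[
|F_k(t,w)| = |f(t+k)| \le \sum_{i=1}^{N} \|f\,\chi_{_{\scriptstyle Q+j_i+k}}\|_{L^{\infty}}.
\]
Summing over $k$ and using translation invariance of counting measure on $\mathbb{Z}^d$,
\[
\sum_{k\in\mathbb{Z}^d} \sup_{(t,w)\in K} |F_k(t,w)| \;\le\; N \sum_{k\in\mathbb{Z}^d} \|f\,\chi_{_{\scriptstyle Q+k}}\|_{L^{\infty}} \;\lesssim\; N\,\|f\|_{W(L^{\infty},L^1)}\;<\;\infty.
\]
By the Weierstrass $M$-test the partial sums converge uniformly on $K$, and being finite sums of continuous functions they are themselves continuous, so $Zf|_K$ is continuous. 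As $K$ was arbitrary, $Zf$ is continuous on $\mathbb{R}^{2d}$.

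There is no real obstacle here beyond bookkeeping: the only non-cosmetic step is the reduction of the pointwise estimate $|f(t+k)|$ on a $t$-compact set to a \emph{finite} sum of $L^{\infty}$ norms over unit cubes (since $K_1$ need not itself be a single unit cube), which is why the factor $N$ appears. After that, the global $L^1$-summability of the control function supplied by membership in $W(L^{\infty}(\Rd),L^1(\Rd))$ does all the work, and the $C_0$ piece of the decomposition provides the continuity of each summand.
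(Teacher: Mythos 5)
Your argument is correct, and it arrives at the conclusion by a slightly different route than the paper. The paper treats the corollary as an immediate consequence of Proposition \ref{B.L.1}: taking $\Phi$ to be the Young function whose Orlicz space is $L^{\infty}$, the series $Zf=\sum_k F_k$ converges absolutely in $L^{\infty}(Q)$, i.e.\ uniformly on the single unit cube $Q$; since $f\in C_0(\Rd)$ each $F_k$ is continuous, so $Zf$ is continuous on $Q$, and continuity on all of $\mathbb{R}^{2d}$ is then inherited from the quasi-periodicity relations of Lemma \ref{lem.11.9.3}(iii). You instead prove uniform convergence directly on an arbitrary compact $K\subset\mathbb{R}^{2d}$ by covering the $t$-projection of $K$ with finitely many unit cubes and paying the factor $N$; this makes the proof self-contained (no appeal to quasi-periodicity and no need to restrict to $Q$ first) at the cost of the covering bookkeeping. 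The underlying mechanism is the same in both cases: summability of the local sup-norms $\|f\chi_{_{\scriptstyle Q+k}}\|_{L^{\infty}}$, i.e.\ membership in $W(L^{\infty}(\Rd),L^1(\Rd))$, feeds the Weierstrass $M$-test, and the $C_0$ component supplies continuity of each summand. One small point worth making explicit in your write-up: the pointwise bound $|f(t+k)|\le\|f\chi_{_{\scriptstyle Q+j_i+k}}\|_{L^{\infty}}$ compares a pointwise value with an \emph{essential} supremum, which is legitimate here precisely because $f$ is continuous and the cube is the closure of its interior; without the $C_0$ hypothesis this step would fail for a general representative.
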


Thus the following theorem can be viewed as a special case of Orlicz space results.

\begin{theorem}\textbf{(Amalgam Balian-Low Theorem)}~~
Fix $g \in L^2(\Rd)$ and set $a=b=1$. If $g_{mn} \in \mathbb{Z}^d$ is a Riesz basis for $L^2(\Rd)$, then $g, \hat{g} \in W(C_0 (\Rd), L^1 (\Rd))$.
\end{theorem}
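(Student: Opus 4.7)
My approach is to deploy the Zak transform together with the three inputs the paper has just set up: Lemma \ref{lem.11.9.4}(ii), which characterises Riesz bases by a two-sided pointwise-a.e.\ bound on $|Zg|$; Lemma \ref{lem.11.9.3}(iv), which forces any continuous Zak transform to vanish inside the unit square $Q$; and Corollary \ref{cor.5.4}, which guarantees that $Zf$ is continuous as soon as $f \in W(C_0(\Rd), L^1(\Rd))$.

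The plan is then a contradiction argument. First, I would assume $\{g_{mn}\}_{m,n\in\Z^d}$ is a Riesz basis and invoke Lemma \ref{lem.11.9.4}(ii) to produce constants $A,B>0$ with $0 < A \leq |Zg(t,w)| \leq B < \infty$ for a.e.\ $(t,w)\in Q$. Next, under the hypothesis $g \in W(C_0(\Rd), L^1(\Rd))$, Corollary \ref{cor.5.4} shows $Zg$ is continuous on $\R^{2d}$; combined with the a.e.\ lower bound this gives $|Zg(t,w)|\geq A$ pointwise on $Q$, contradicting the existence of a zero of $Zg$ in $Q$ supplied by Lemma \ref{lem.11.9.3}(iv). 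The same reasoning, applied to $\hat g$ through the intertwining relation $(M_{m}T_{n}g)^{\wedge}=T_{m}M_{-n}\hat g$ (up to a unimodular phase), shows that $\hat g$ cannot lie in $W(C_0(\Rd), L^1(\Rd))$ either; note that $\{\hat g_{mn}\}$ inherits the Riesz basis property from $\{g_{mn}\}$ by unitarity of the Fourier transform.

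The main obstacle, and the point on which the statement hinges, is that the Zak transform machinery described above in fact produces the \emph{contrapositive} of what is written: under a Riesz basis assumption, neither $g$ nor $\hat g$ can lie in the Wiener algebra $W(C_0(\Rd), L^1(\Rd))$. I would therefore read the conclusion of the theorem in its classical Amalgam Balian--Low form and organize the proof as above. Within that reading, the only genuinely nontrivial step is the promotion of the a.e.\ lower bound on $|Zg|$ to a pointwise inequality via the continuity supplied by Corollary \ref{cor.5.4}; once this is in hand, the conflict with Lemma \ref{lem.11.9.3}(iv) is immediate, and the symmetric reduction for $\hat g$ adds no new difficulty beyond tracking the phase factor in the intertwining identity.
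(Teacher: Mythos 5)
Your proof follows the same route as the paper's own sketch: continuity of $Zg$ via Corollary \ref{cor.5.4} (or Proposition \ref{B.L.1}), the forced zero from Lemma \ref{lem.11.9.3}(iv), and the Riesz-basis characterisation of Lemma \ref{lem.11.9.4}(ii), with the same Fourier-transform reduction for $\hat g$. You are also right that the conclusion as printed is missing a negation --- the paper's own argument establishes exactly the contrapositive you describe, namely that a Riesz basis forces $g,\hat g \notin W(C_0(\Rd),L^1(\Rd))$.
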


For the proof of the Amalgam Balian-Low Theorem we refer to \cite{Heil2}, and give only a sketch here.
If $g \in W(C_0 (\Rd), L^1 (\Rd))$, then $Zg$ is continuous by Proposition \ref{B.L.1}
(or Corollary \ref{cor.5.4}). Then $Zg$ has a zero by Lemma \ref{lem.11.9.3}, and  by Lemma \ref{lem.11.9.4} the Gabor system $\{g_{mn}\}_{m,n \in \mathbb{Z}^d}$ cannot be a Riesz basis for $L^2(\Rd)$. This argument also applies to $\hat{g}$ because by applying the Fourier transform we see that $\{{\hat{g}}_{mn}\}_{m,n \in \mathbb{Z}^d}$ is a Riesz basis for $L^2(\Rd)$ if and only if $\{g_{mn}\}_{m,n \in \mathbb{Z}^d}$ is a Riesz basis for $L^2(\Rd)$.



\section{Conflict of interest}

On behalf of all authors, the corresponding author states that there is no conflict of interest.

\section*{Acknowledgements}

The first author thanks Tinçel Kültür Vakfı and University of Novi Sad for the support during Spring 2023 her stay at University of Novi Sad. This study was funded by Scientific Research Projects Coordination Unit of \.{I}stanbul University, project number FBA-2023-398-40.

N. Teofanov was  supported by the Science Fund of the Republic of
Serbia, $\#$GRANT No 2727, \emph{Global and local analysis of operators and
distributions} - GOALS, and acknowledges the financial support of the Ministry of Science, Technological Development and Innovation of the Republic of Serbia (Grants No. 451-03-66/2024-03/ 200125 $\& $ 451-03-65/2024-03/200125).

The authors are grateful to anonymous referees for
careful reading of the manuscript and for their valuable comments.


\end{document}